\documentclass[11pt,twoside]{article}
\usepackage{amssymb}
\usepackage{latexsym}
\usepackage{graphicx}
\usepackage{amsmath}
\usepackage{hyperref,amsthm}
\usepackage{epsfig}

\numberwithin{equation}{section}
\setlength{\evensidemargin}{+0.30in}
\setlength{\evensidemargin}{+0.00in} \setlength{\oddsidemargin}
{+0.30in} \setlength{\oddsidemargin} {+0.00in}
\setlength{\textwidth}     {+6.50in} \setlength{\topmargin}
{-0.50in} \setlength{\topmargin}     {+0.00in}
\setlength{\textheight}    {+8.50in}
\parskip=3pt
\normalsize \makeatletter
\newtheorem{thm}{Theorem}[section]
\newtheorem{lemma}[thm]{Lemma}

\newtheorem{definition}[thm]{Definition}

\newtheorem{rem}[thm]{Remark}

\newcommand{\be}{\begin{equation}}
\newcommand{\ee}{\end{equation}}
\newcommand{\bea}{\begin{eqnarray*}}
\newcommand{\eea}{\end{eqnarray*}}
\newcommand{\mR}{\mathbb{R}}

\newcommand\argmin{\operatorname{arg\,min}}

\newenvironment{keyword}{\smallskip\noindent{\bf Keywords.}
                          \hskip\labelsep}{}

\begin{document}
\date{}
\title{ \textbf {Compressed Sensing with coherent tight frames via $l_q$-minimization for $0<q\leq1$
\footnote{This work is supported by NSF of China under grant numbers
10771190, 10971189 and Zhejiang Provincial NSF of China under grant
number Y6090091.}}}
\author{Song Li and Junhong Lin\thanks{Corresponding author: Junhong Lin.
\newline E-mail adress: songli@zju.eud.cn (S. Li), jhlin5@hotmail.com (J. Lin).
\newline 2010 Mathematics Subiect Classification. Primary 94A12, 94A15, 94A08, 68P30; Secondary 41A63, 15B52, 42C15.}
\\Department of Mathematics, Zhejiang University \\Hangzhou, 310027, P. R. China } \maketitle \baselineskip 15pt

\begin{abstract}
Our aim of this article is to reconstruct a signal from undersampled
data in the situation that the signal is sparse in terms of a tight
frame. We present a condition, which is independent of the coherence
of the tight frame, to guarantee accurate recovery of signals which
are sparse in the tight frame, from undersampled data with minimal
$l_1$-norm of transform coefficients. This improves the result in
\cite{Candes}. Also, the $l_q$-minimization $(0<q<1)$ approaches are
introduced. We show that under a suitable condition, there exists a
value $q_0\in(0,1]$ such that for any $q\in(0,q_0)$, each solution
of the $l_q$-minimization is approximately well to the true signal.
In particular, when the tight frame is an identity matrix or an
orthonormal basis, all results obtained in this paper appeared in
\cite{Mo} and \cite{Lai}.

\begin{keyword}
Compressed sensing, $D$-Restricted isometry property, Tight frames,
$l_q$-minimization, Sparse recovery, Coherence.
\end{keyword}
\end{abstract}

\section{Introduction}
Compressed sensing is a new type of sampling theory, that predicts
sparse signals can be reconstructed from what was previously
believed to be incomplete information \cite{candes1,candes2,donoho}.
By now, applications of compressed sensing are abundant and range
from medical imaging and error correction to radar and remote
sensing, see \cite{a1,a2} and the references therein.

In compressed sensing, one considers the following model: \be
\label{e7}y=Ax+z,\ee where $A$ is a known $m\times n$ measurement
matrix (with $m\ll n$) and $z\in \mR^n$ is a vector of measurement
errors. The goal is to reconstruct the unknown signal $x$ based on
$y$ and $A$. The key idea of compressed sensing relies on that
signal is sparse or approximately sparse. A naive approach for
solving this problem consists in searching for the sparsest vector
that is consistent with the linear measurements, which leads to:
$$\min\limits_{\tilde{x}\in{\mR^n}}\|\tilde{x}\|_0\quad\mbox{subject
to}\quad \|A\tilde{x}-y\|_2\leq \varepsilon,
\eqno{(L_{0,\varepsilon})}$$where $\|x\|_0$ is the numbers of
nonzero components of $x=(x_1,...,x_n)\in\mR^n$, $\|\cdot\|_2$
denotes the standard Euclidean norm and $\varepsilon\geq 0$ is a
likely upper bound on the noise level $\|z\|_2$. If $\varepsilon=0$,
it is for the noiseless case. If $\varepsilon>0$, it is for the
noisy case. We call that a vector $x$ is $s$-sparse if $\|x\|_0\leq
s$. Unfortunately, solving $(L_{0,\varepsilon})$ directly is NP-hard
in general and thus is computationally infeasible
\cite{mallat,natarajan}. One of the practical and tractable
alternatives to $(L_{0,\varepsilon})$ proposed in the literature is:
$$\min\limits_{\tilde{x}\in{\mR^n}}\|\tilde{x}\|_1\quad\mbox{subject
to}\quad \|A\tilde{x}-y\|_2\leq \varepsilon,
\eqno{(L_{1,\varepsilon})}$$ which is a convex optimization problem
and can be seen as a convex relaxation of $(L_{0,\varepsilon})$. The
restricted isometry property (RIP), which first appeared in
\cite{candes3}, is one of the most commonly used frameworks for
sparse recovery via $(L_{1,\varepsilon})$. For an integer $s$ with
$1\leq s\leq n$, we define the $s$-restricted isometry constants of
a matrix $A$ as the smallest constants satisfying
$$(1-\delta_s)\|x\|_2^2\leq\|Ax\|_2^2\leq(1+\delta_s)\|x\|_2^2$$
for all $s$-sparse vectors $x$ in $\mR^n$. By computation, one would
observe that
\be\label{e(delta)}\delta_s=\max_{T\subset\{1,\cdots,n\},|T|\leq
s}\|A_T^*A_T-I\|,\ee where $\|\cdot\|$ denotes the spectral norm of
a matrix. However, it would be computationally difficult to compute
$\delta_s$ using (\ref{e(delta)}). One of the good news is that many
types of random measurement matrices have small restricted isometry
constants with very high probability provided that the measurements
$m$ is large enough \cite{candes2,Rudelson,Baraniuk}. Since
$\delta_{2s}<1$ is a necessary and sufficient condition to guarantee
that any $s$-sparse vector $f$ is exactly recovered via ($L_{0,0}$)
in the noiseless case, many attentions have been focused on
$\delta_{2s}$ in the literature\cite{candes4,Foucart,cai2,Mo}.
Cand\`{e}s \cite{candes4} showed that under the condition
$\delta_{2s}<0.414$, one can recover a (approximately) sparse signal
with a small or zero error using $(L_{1,\varepsilon})$. Later, the
sufficient condition on $\delta_{2s}$ were improved to
$\delta_{2s}<0.453$ by Lai et al. \cite{Foucart} and
$\delta_{2s}<0.472$ by Cai et al. \cite{cai2}, respectively.
Recently, Li and Mo \cite{Mo} has improved the sufficient condition
to $\delta_{2s}<0.493$ and for some special cases, to
$\delta_{2s}<0.656$. To the best of our knowledge, this is the best
known bound on $\delta_{2s}$ in the literature. On the other hand,
Davies and Gribonval \cite{Davies} constructed examples which showed
that if $\delta_{2s}\geq 1/\sqrt{2}$, exact recovery of certain
$s$-sparse signals can fail in the noiseless case.

For signals which are sparse in the standard coordinate basis or
sparse in terms of some other orthonormal basis, the mechanism above
holds. However, in practical examples, there are numerous signals of
interest which are not sparse in an orthonormal basis. More often
than not, sparsity is not expressed in terms of an orthogonal basis
but in terms of an overcomplete dictionary \cite{Candes}.

In this paper, we consider recovery of signals that are sparse in
terms of a tight frame from undersampled data. Formally, let $D$ be
a $n\times d$ matrix whose $d$ columns $D_1,...,D_d$ form a tight
frame for $\mR^n$, i.e.
$$f=\sum_k\langle f,D_k\rangle D_k\quad\mbox{and}\quad
\|f\|_2^2=\sum_k|\langle f,D_k\rangle|^2\quad \mbox{for all}\quad
f\in\mR^n,$$ where $\langle\cdot,\cdot\rangle$ denotes the standard
Euclidean inner product. Our object in this paper is to reconstruct
the unknown signal $f\in\mR^n$ from a collection of $m$ linear
measurements $y=Af+z$ under the assumption that $D^*f$ is sparse or
nearly sparse. Such problem has been considered in
\cite{tropp,Bajwa,Rauhut,Candes}. The methods in
\cite{tropp,Rauhut,Bajwa} force incoherence on the dictionary $D$ so
that the matrix $AD$ conforms to the above standard compressed
sensing results. As a result, they may not be suitable for
dictionary which are largely correlated. One new alternative way
imposing no such properties on the dictionary $D$ for reconstructing
the signal $f$ from $y=Af+z$ is to find the solution of
$l_1$-minimization:
$$\hat{f}=
\argmin\limits_{\tilde{f}\in{\mR^n}}\|D^*\tilde{f}\|_1\quad\mbox{subject
to}\quad \|A\tilde{f}-y\|_2\leq \varepsilon,
\eqno{(P_{1,\varepsilon})}$$ where again $\varepsilon\geq 0$ is a
likely upper bound on the noise level $\|z\|_2$. For discussion of
the performance of this method, we would like to introduce the
definition of $D$-RIP of a measurement matrix, which first appeared
in \cite{Candes} and is a natural extension to the standard RIP.
\begin{definition}[$D$-RIP] Let $D$ be a tight frame and $\Sigma_s$ be the set of all $s$-sparse
vectors in $\mathbb{R}^d$. A measurement matrix $A$ is said to obey
the restricted isometry property adapted to $D$ (abbreviated
$D$-RIP) with constants $\delta_s$ if
$$(1-\delta_s)\|Dv\|_2^2\leq\|ADv\|_2^2\leq(1+\delta_s)\|Dv\|_2^2$$
holds for all $v\in\Sigma_s$.
\end{definition}For the rest of this paper, $D$ is a $n\times d$ tight frame and $\delta_s$ denotes the
$D$-RIP constants with order $s$ of the measurement matrix $A$
without special mentioning. Throughout this paper, denote $x_{[s]}$
to be the vector consisting of the $s$-largest coefficients of
$v\in\mR^d$ in magnitude:
$$x_{[s]}=\argmin_{\|\tilde{x}\|_0\leq s}\|x-\tilde{x}\|_2.$$
Cand\`{e}s et al. \cite{Candes} showed that if $A$ satisfies $D$-RIP
with $\delta_{2s}<0.08$ (in fact, a weaker condition
$\delta_{7s}<0.6$ was stated), then the solution $\hat{f}$ to
$(P_{1,\varepsilon})$ satisfies \be\label{candes
equality}\|\hat{f}-f\|_2\leq
C_0\frac{\|D^*f-(D^*f)_{[s]}\|_1}{\sqrt{s}}+C_1\varepsilon,\ee where
the constants $C_0$ and $C_1$ may only depend on $\delta_{2s}$. It
is easy to see that it's computationally difficult to verify the
$D$-RIP for a given deterministic matrix. But for matrices with
Gaussian, subgaussian, or Bernoulli entries, the $D$-RIP condition
will be satisfied with overwhelming probability provided that the
numbers of measurements $m$ is on the order of $s\log(d/s)$. In
fact, for any $m\times n$ matrix $A$ obeying for any fixed
$\nu\in\mR^n$,\be\label{e(concentrated)}\mathbb{P}\left(\big|\|A\nu\|_2^2-\|\nu\|_2^2\big|\geq\delta\|\nu\|_2^2\right)\leq
c{e}^{-\gamma m\delta^2},\quad \delta\in(0,1)\ee($\gamma$, $c$ are
positive numerical constants) will satisfy the $D$-RIP with
overwhelming probability provided that $m \gtrsim s\log(d/s)$
\cite{Candes}. Therefore, by using $D$-RIP, the work is independent
on the coherence of the dictionary. The result holds even when the
coherence of the dictionary $D$ is maximal, meaning two columns are
completely correlated. Although Can\`{e}s et al. in \cite{Candes}
gave the sufficient condition on $\delta_{2s}$ to guarantee
approximately recovery of a signal via $(P_{1,\varepsilon})$, the
bound on $\delta_{2s}$ is much weaker comparing to the case for
which $D$ is an orthonormal basis. We focus on improving it in this
paper.

Our first goal of this paper is to show that the sufficient
condition on $\delta_{2s}$ above can be improved to
$\delta_{2s}<0.493$. And in some special cases, the sufficient
condition can be improved to $\delta_{2s}<0.656$. These results are
given and proved in Section 3. Weakening the $D$-RIP condition has
several benefits. First, it allows more measurement matrices to be
used in compressed sensing. Secondly, it give better error
estimation in a general problem to recover noisy compressible
signal. For example, if $\delta_{2s}=1/14$, Then by \cite[Corollary
3.4]{needell}, $\delta_{7s}\leq 0.5$. Using the approach in
\cite{Candes} one would get a estimation in (\ref{candes equality})
with $C_0=30, C_1=62$. While by Theorem \ref{thm5} in Section 3 of
this paper, one would get a estimation in (\ref{candes equality})
with $C_0\simeq5.06$ and $C_1\simeq10.57$. Finally, for the same
measurement random matrix $A$ which satisfies
(\ref{e(concentrated)}), a standard argument as in
\cite{Baraniuk,Candes} shows that it allows recovering a sparse
signal with more non-zero transform coefficients. In a nutshell,
weakening the $D$-RIP condition for $(P_{1,\varepsilon})$ is as
important as weakening the RIP condition for classical
$(L_{1,\varepsilon})$.

Note that the $l_0$-norm is the limit as $q\rightarrow 0$ of the
$l_q$-norm in the following
sense:\be\|x\|_0=\lim_{q\rightarrow0}\|x\|_q^q=\lim_{q\rightarrow0}\sum_{j}|x_j|^q\nonumber.\ee
Thus $l_q$-norm with $0<q<1$ can be used for measuring sparsity.
Therefore, one alternative way of finding the solution of
$(L_{0,\varepsilon})$ proposed in the literature is to solve:
$$\min_{\tilde{x}\in{\mR^n}}\|\tilde{x}\|_q\quad\mbox{subject
to}\quad \|A\tilde{x}-y\|_2\leq \varepsilon.
\eqno{(L_{q,\varepsilon})}$$ This is a non-convex optimization
problem since $l_q$-norm with $0 <q< 1$ is not a norm but a
quasi-norm. For any fixed $0 < q < 1$, while checking the global
minimal value of $(L_{q,\varepsilon})$ is NP-hard, computing a local
minimizer of the problem is polynomial time doable \cite{ge}.
Therefore, to solve $(L_{q,\varepsilon})$ is still much faster than
to solve $(L_{0,\varepsilon})$ at least locally. Reconstruction
sparse signals via $(L_{q,\varepsilon})$ with $0<q<1$ have been
considered in the literature in a series of papers
\cite{chartrand2,Chartrand,Saab,Foucart,daubechies,Lai} and some of
the virtues are highlighted recently. Lai and Liu \cite{Lai} showed
that as long as the classical restricted isometry constant
$\delta_{2s}<1/2$, there exist a value $q_0\in (0,1]$ such that for
any $q \in(0, q_0)$, each solution of ${(L_{q,0})}$ for the sparse
solution of any underdetermined linear system is the sparsest
solution. Thus, it's natural for us to consider the reconstruction
of a signal $f$ from $y=Af+z$ by the method of $l_q$-minimization
($0<q<1$):
$$\hat{f}=
\argmin_{\tilde{f}\in{\mR^n}}\|D^*\tilde{f}\|_q\quad\mbox{subject
to}\quad \|A\tilde{f}-y\|_2\leq \varepsilon.
\eqno{(P_{q,\varepsilon})}$$ Our second goal of this paper is to
estimate the approximately error between $\hat{f}$ and $f$ when
using the $l_q$-minimization $(P_{q,\varepsilon})$. We show that if
the measurement matrix satisfies the $D$-RIP condition with
$\delta_{2s}<1/2$, then there exists a value
$q_0=q_0(\delta_{2s})\in(0,1]$ such that for any $q\in(0,q_0),$ each
solution of the $l_q$-minimization is approximately well to the true
signal $f$.

This paper is organized as follows. Some lemmas and notations are
introduced in Section 2. Section 3 is devoted to discuss recovery of
a signal from noisy data via $l_1$-minimization. We begin by give
some lemmas in this section. And then we discuss approximately
recovery of a signal for the general case in Subsection 3.1 while
for the special case in Subsection 3.2. Our main results in this
section are Theorem \ref{thm5}, Theorem \ref{thm6}. In Section 4, we
discuss recovery of a signal from noisy data via $l_q$-minimization
with $0<q\leq1$. Some lemmas and notations are introduced at the
beginning. Subsequently, we give the main result Theorem \ref{thm7}
and prove it.

\section{Lemmas}
We will give some lemmas and notations first. We begin by discussing
some of the results of recovery of a signal by $l_0$-minimization:
$$\hat{f}=\argmin\limits_{\tilde{f}\in\mR^n}\|D^*\tilde{f}\|_0,\quad\mbox{subject
to }\quad A\tilde{f}=y.\eqno{(P_0)}$$

\begin{lemma}\label{p0}If $\delta_{2s}<1$, then every signal $f$ such that $\|D^*f\|_0\leq s$ can be exactly
recovered by solving $(P_0)$. \end{lemma}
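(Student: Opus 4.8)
The plan is to argue by contradiction, using the hallmark injectivity trick: if exact recovery fails, two distinct signals with sparse transform coefficients produce the same measurements, which forces a genuinely $2s$-sparse vector into the null space of $AD$, contradicting $\delta_{2s}<1$. Concretely, suppose $f$ satisfies $\|D^*f\|_0\le s$ but the minimizer $\hat f$ of $(P_0)$ is not equal to $f$. Since $f$ is feasible for $(P_0)$ (as $Af=y$), optimality gives $\|D^*\hat f\|_0\le\|D^*f\|_0\le s$. Both $D^*f$ and $D^*\hat f$ are therefore $s$-sparse vectors in $\mathbb{R}^d$.

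Next I would pass to the difference. Set $v=D^*f-D^*\hat f\in\mathbb{R}^d$; then $\|v\|_0\le 2s$, so $v\in\Sigma_{2s}$. Using the tight-frame synthesis identity $g=\sum_k\langle g,D_k\rangle D_k=D D^*g$ for every $g\in\mathbb{R}^n$, we have $Dv=DD^*f-DD^*\hat f=f-\hat f$. Consequently $ADv=Af-A\hat f=y-y=0$, so $\|ADv\|_2^2=0$.

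Finally I would invoke the $D$-RIP lower bound. Since $v\in\Sigma_{2s}$, the definition of the $D$-RIP constant $\delta_{2s}$ gives
$$(1-\delta_{2s})\|Dv\|_2^2\le\|ADv\|_2^2=0.$$
Because $\delta_{2s}<1$ we have $1-\delta_{2s}>0$, hence $\|Dv\|_2=0$, i.e.\ $\hat f=f-Dv=f$, contradicting $\hat f\neq f$. Therefore the minimizer of $(P_0)$ coincides with $f$, which is exactly the claim.

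I do not foresee a serious obstacle here; the only point that deserves a line of care is the bookkeeping that $\|v\|_0\le 2s$ and that $Dv=f-\hat f$ via the frame identity $DD^*=I$ on $\mathbb{R}^n$ — everything else is the standard null-space/RIP argument. One could equivalently phrase the whole thing as: $\delta_{2s}<1$ forces $ADv\neq 0$ for every nonzero $v\in\Sigma_{2s}$, so no two signals with $s$-sparse analysis coefficients can share the same measurements, which is precisely the injectivity needed for $(P_0)$ to succeed.
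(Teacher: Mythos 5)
Your proof is correct and is essentially the same argument as the paper's: both set the difference vector (your $v=D^*f-D^*\hat f$ is the paper's $-D^*h$), note it is $2s$-sparse and that $ADv=0$ via the tight-frame identity $DD^*=I$, and then invoke the $D$-RIP lower bound with $\delta_{2s}<1$ to force $Dv=0$ and hence $\hat f=f$. The only cosmetic difference is that you phrase it as a contradiction while the paper argues directly; no gap either way.
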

\begin{proof}Since $\hat{f}$ is the solution of ($P_0$), we have $\|D^*\hat{f}\|_0\leq \|D^*{f}\|_0\leq
s$. Then $h=\hat{f}-f$ satisfies $\|D^*{h}\|_0\leq2s$. Notice that
$0=Ah=ADD^*h$. Thus according to the definition of $D$-RIP, we have
$$(1-\delta_{2s})\|DD^*h\|_2^2\leq\|ADD^*h\|_2^2=0.$$ Combining with
$\delta_{2s}<1,$ we get $0=DD^*h=h$. The proof is finished.
\end{proof}

\begin{lemma}\label{lemma2}For all $u,v\in\Sigma_s$, we have
$$\langle ADu,ADv\rangle \leq \delta_{2s}\|Du\|_2\|Dv\|_2+\ \langle Du,Dv\rangle.$$
\end{lemma}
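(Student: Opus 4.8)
The plan is to reduce the statement to the standard polarization-plus-RIP argument, using the parallelogram identity in $\mathbb{R}^n$ and the homogeneity of the claimed inequality. First I would dispose of the degenerate case: if $Du=0$ or $Dv=0$, then $ADu=0$ or $ADv=0$, so $\langle ADu,ADv\rangle=0$ and likewise the right-hand side is $0$, and there is nothing to prove. Hence assume $\|Du\|_2>0$ and $\|Dv\|_2>0$. Observe that the claimed inequality is homogeneous of degree one separately in $u$ and in $v$: replacing $u$ by $\alpha u$ multiplies $\langle ADu,ADv\rangle$, $\delta_{2s}\|Du\|_2\|Dv\|_2$, and $\langle Du,Dv\rangle$ all by $\alpha$, and similarly for $v$; moreover $\alpha u\in\Sigma_s$ whenever $u\in\Sigma_s$. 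So, after rescaling, I may assume without loss of generality that $\|Du\|_2=\|Dv\|_2$.

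Next I would note that $u+v$ and $u-v$ both lie in $\Sigma_{2s}$, so the $D$-RIP bounds apply to them with constant $\delta_{2s}$. Using the polarization identity $\langle ADu,ADv\rangle=\tfrac14\big(\|AD(u+v)\|_2^2-\|AD(u-v)\|_2^2\big)$, I would bound the first square from above by $(1+\delta_{2s})\|D(u+v)\|_2^2$ and the second from below by $(1-\delta_{2s})\|D(u-v)\|_2^2$, obtaining
$$\langle ADu,ADv\rangle\le \tfrac14\big(\|D(u+v)\|_2^2-\|D(u-v)\|_2^2\big)+\tfrac{\delta_{2s}}{4}\big(\|D(u+v)\|_2^2+\|D(u-v)\|_2^2\big).$$

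Finally I would simplify with the parallelogram law applied to $Du,Dv$: $\|D(u+v)\|_2^2-\|D(u-v)\|_2^2=4\langle Du,Dv\rangle$ and $\|D(u+v)\|_2^2+\|D(u-v)\|_2^2=2\|Du\|_2^2+2\|Dv\|_2^2$. Substituting turns the right-hand side into $\langle Du,Dv\rangle+\tfrac{\delta_{2s}}{2}\big(\|Du\|_2^2+\|Dv\|_2^2\big)$, and the normalization $\|Du\|_2=\|Dv\|_2$ collapses $\tfrac12(\|Du\|_2^2+\|Dv\|_2^2)$ to $\|Du\|_2\|Dv\|_2$, which is the asserted bound. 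There is no genuine obstacle here; the only point that needs care is the homogeneity/normalization reduction, which is precisely what lets one replace the arithmetic mean $\tfrac12(\|Du\|_2^2+\|Dv\|_2^2)$ by the geometric mean $\|Du\|_2\|Dv\|_2$ — without it the argument only yields the weaker estimate with $\tfrac{\delta_{2s}}{2}(\|Du\|_2^2+\|Dv\|_2^2)$ on the right-hand side.
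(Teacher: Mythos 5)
Your proposal is correct and follows essentially the same route as the paper: normalize so that $\|Du\|_2=\|Dv\|_2$ (the paper takes both equal to $1$), apply the polarization identity together with the $D$-RIP bounds on $u+v,\,u-v\in\Sigma_{2s}$, and then undo the normalization — the "simple modification" the paper leaves implicit is exactly the homogeneity argument you spell out. No gaps.
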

\begin{proof}For $u,v\in\Sigma_s$, assume that
$\|Du\|_2=\|Dv\|_2=1$. By the definition of $D$-RIP, we have
\bea &&\langle ADu,ADv\rangle=\frac{1}{4}\left\{\|ADu+ADv\|_2^2-\|ADu-ADv\|_2^2\right\}\\
&\leq&
\frac{1}{4}\left\{(1+\delta_{2s})\|Du+Dv\|_2^2-(1-\delta_{2s})\|Du-Dv\|_2^2\right\}
=\delta_{2s}+\ \langle Du,Dv\rangle.\eea Thus by a simple
modification, we conclude the proof.
\end{proof}

\begin{rem}Notice that when $D$ is an identity matrix and for any $s$-sparse vectors $u,v$ with
disjoint supports, one have $$\langle Au,Av\rangle \leq
\delta_{2s}\|u\|_2\|v\|_2,$$ which is the same as \cite[Lemma
2.1]{candes4}.
\end{rem}

For $T\subset\{1,...,d\},$ denote by $D_T$ the matrix $D$ restricted
to the columns indexed by $T$, and write $D_T^*$ to mean $(D_T)^*$,
$T^c$ to mean the complement of $T$ in $\{1,\cdots,d\}$. Given a
vector $h\in\mR^n$, we write
$D^*h=(x_1,\cdots,x_s,\cdots,x_{2s},\cdots,x_d).$ Let
$D^*h=D_{T_0}^*h+D_{T_1}^*h+\cdots D_{T_l}^*h,$ where
$D_{T_0}^*h=(x_1,x_2,\cdots,x_s,0,0,\cdots,0),$
$D_{T_1}^*h=(0,0,\cdots,0,x_{s+1},x_{s+2},\cdots,x_{2s},0,0,\cdots,0),$
$D_{T_2}^*h=(0,0,\cdots,0,x_{2s+1},x_{2s+2},\cdots,x_{3s},0,0,$
$\cdots,0),$$\cdots$ and $D_{T_l}^*h=(0,0,\cdots,0,x_{ls+1},$
$x_{ls+2},\cdots,x_{d}).$ Denote $T_{01}=T_0\cup T_1$ and $T=T_0$.
For simplicity, we assume that $\delta_{2s}<1$.

\begin{lemma}\label{lemma3}We have
$$\|\sum\limits_{j=2}^{l}ADD_{T_j}^*h\|_2^2\leq\sum\limits_{j=2}^{l}\|D_{T_j}^*h\|_2^2
+\delta_{2s}\left(\sum\limits_{j=2}^{l}\|D_{T_j}^*h\|_2\right)^2.$$
\end{lemma}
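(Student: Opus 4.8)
The plan is to expand the squared norm on the left-hand side into a double sum of inner products $\langle ADD_{T_j}^*h,\,ADD_{T_k}^*h\rangle$ and bound each term by Lemma \ref{lemma2}. Observe first that each $D_{T_j}^*h$ belongs to $\Sigma_s$, since $|T_j|\le s$; hence Lemma \ref{lemma2} is applicable to every pair $(j,k)$ with $2\le j,k\le l$, the diagonal case $j=k$ included.

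First I would write
$$\Big\|\sum_{j=2}^{l}ADD_{T_j}^*h\Big\|_2^2=\sum_{j=2}^{l}\sum_{k=2}^{l}\langle ADD_{T_j}^*h,\,ADD_{T_k}^*h\rangle$$
and apply Lemma \ref{lemma2} to each summand, which yields the bound
$$\sum_{j=2}^{l}\sum_{k=2}^{l}\Big(\delta_{2s}\|DD_{T_j}^*h\|_2\|DD_{T_k}^*h\|_2+\langle DD_{T_j}^*h,\,DD_{T_k}^*h\rangle\Big)=\delta_{2s}\Big(\sum_{j=2}^{l}\|DD_{T_j}^*h\|_2\Big)^2+\Big\|\sum_{j=2}^{l}DD_{T_j}^*h\Big\|_2^2.$$
Then I would invoke two elementary properties of the tight frame $D$. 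Since $D^*$ is an isometry (that is, $\|D^*f\|_2=\|f\|_2$ for all $f\in\mR^n$), the synthesis operator $D$ has spectral norm $1$, so $\|Dv\|_2\le\|v\|_2$ for every $v\in\mR^d$; and since the index sets $T_2,\dots,T_l$ are pairwise disjoint, $\big\|\sum_{j=2}^{l}D_{T_j}^*h\big\|_2^2=\sum_{j=2}^{l}\|D_{T_j}^*h\|_2^2$. Applying the contraction $\|Dv\|_2\le\|v\|_2$ with $v=D_{T_j}^*h$ in the first term and with $v=\sum_{j=2}^{l}D_{T_j}^*h$ in the second, the right-hand side above is at most
$$\delta_{2s}\Big(\sum_{j=2}^{l}\|D_{T_j}^*h\|_2\Big)^2+\sum_{j=2}^{l}\|D_{T_j}^*h\|_2^2,$$
which is exactly the asserted inequality.

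I do not anticipate any real difficulty; the only points that deserve a moment's care are verifying that Lemma \ref{lemma2} is legitimately used on the diagonal terms and recording the contraction property $\|Dv\|_2\le\|v\|_2$ of the synthesis operator of a tight frame. (As an alternative to using Lemma \ref{lemma2} on the diagonal, one may bound each $\|ADD_{T_j}^*h\|_2^2$ directly by $(1+\delta_{2s})\|DD_{T_j}^*h\|_2^2$ via the $D$-RIP, arriving at the same conclusion.)
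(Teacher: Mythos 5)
Your proof is correct, and its skeleton coincides with the paper's: expand the squared norm into a double sum, apply Lemma \ref{lemma2} to each pair (the diagonal terms being handled either by Lemma \ref{lemma2} with $u=v$ or, equivalently, by the $D$-RIP upper bound), and collect to get $\|\sum_{j=2}^{l}DD_{T_j}^*h\|_2^2+\delta_{2s}(\sum_{j=2}^{l}\|DD_{T_j}^*h\|_2)^2$. The one place you diverge is the final step: you bound $\|\sum_{j=2}^{l}DD_{T_j}^*h\|_2^2$ directly by $\|\sum_{j=2}^{l}D_{T_j}^*h\|_2^2=\sum_{j=2}^{l}\|D_{T_j}^*h\|_2^2$, using the contraction $\|Dv\|_2\le\|v\|_2$ together with the disjointness of the supports of the $D_{T_j}^*h$. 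The paper instead invokes the tight-frame identity $DD^*=I$ to write $\sum_{j=2}^{l}DD_{T_j}^*h=h-DD_{T_{01}}^*h$ and expands, obtaining the exact identity (\ref{e9}) and the intermediate bound (\ref{e10}) containing the extra term $\|DD_{T_{01}}^*h\|_2^2-\|D_{T_{01}}^*h\|_2^2\le 0$, which is only then discarded. Your route is shorter and perfectly adequate for Lemma \ref{lemma3} itself; the payoff of the paper's longer route is that (\ref{e9}) and (\ref{e10}) are reused verbatim in the proof of Lemma \ref{lemma4}, where one cannot simply drop that extra term but must trade it against $-(1-\delta_{2s})\|DD_{T_{01}}^*h\|_2^2$. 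So if you continued to Lemma \ref{lemma4} with your shortcut you would need to reinstate essentially the paper's computation there.
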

\begin{proof}By the definition of $\delta_{2s}$ and Lemma
\ref{lemma2}, we have
\bea&&\|\sum\limits_{j=2}^{l}ADD_{T_j}^*h\|_2^2=\sum\limits_{i,j=2}^{l}\langle ADD_{T_i}^*h,ADD_{T_j}^*h\rangle\\
&=&\sum\limits_{j=2}^{l}\|ADD_{T_j}^*h\|_2^2+2\sum\limits_{2\leq
i<j\leq l}\langle ADD_{T_i}^*h,ADD_{T_j}^*h\rangle\\
&\leq&(1+\delta_{2s})\sum\limits_{j=2}^{l}\|DD_{T_j}^*h\|_2^2+2\sum\limits_{2\leq
i<j\leq l}\langle
DD_{T_i}^*h,DD_{T_j}^*h\rangle+2\delta_{2s}\sum\limits_{2\leq
i<j\leq l}\|DD_{T_{i}}^*h\|_2\|DD_{T_{j}}^*h\|_2\\
&=&\|\sum\limits_{j=2}^{l}DD_{T_j}^*h\|_2^2+\delta_{2s}\left(\sum\limits_{j=2}^{l}\|DD_{T_j}^*h\|_2\right)^2.\eea

Notice that by $\|h\|_2^2=\sum\limits_{j=0}^{l}\|D_{T_j}^*h\|_2^2$,
\begin{eqnarray}\label{e9}&&\|\sum\limits_{j=2}^{l}DD_{T_j}^*h\|_2^2=\|h-DD_{T_{01}}^*h\|_2^2
=\|h\|_2^2-2\langle
h,DD_{T_{01}}^*h\rangle+\|DD_{T_{01}}^*h\|_2^2\nonumber\\
&=&\|h\|_2^2-2\|D_{T_{01}}^*h\|^2+\|DD_{T_{01}}^*h\|_2^2=\sum\limits_{j=2}^{l}\|D_{T_j}^*h\|_2^2-\|D_{T_{01}}^*h\|_2^2
+\|DD_{T_{01}}^*h\|_2^2.\end{eqnarray}
Therefore, we get\begin{eqnarray}\label{e10}
\|\sum\limits_{j=2}^{l}ADD_{T_j}^*h\|_2^2&\leq&
\sum\limits_{j=2}^{l}\|D_{T_j}^*h\|_2^2-\|D_{T_{01}}^*h\|_2^2+\|DD_{T_{01}}^*h\|_2^2
+\delta_{2s}\left(\sum\limits_{j=2}^{l}\|D_{T_j}^*h\|_2\right)^2\\
&\leq&\sum\limits_{j=2}^{l}\|D_{T_j}^*h\|_2^2
+\delta_{2s}\left(\sum\limits_{j=2}^{l}\|D_{T_j}^*h\|_2\right)^2,\nonumber\end{eqnarray}
where we have used
$\|DD_{T_{j}}^*h\|_2\leq\|D_{T_{j}}^*h\|_2,j\in\{0,1,\cdots l\}.$
\end{proof}

\begin{lemma}\label{lemma4}We have
$$\|\sum\limits_{j=2}^{l}ADD_{T_j}^*h\|_2^2-\|ADD_{T_{01}}^*h\|_2^2\leq\sum\limits_{j=2}^{l}\|D_{T_j}^*h\|_2^2
+\delta_{2s}\left(\sum\limits_{j=2}^{l}\|D_{T_j}^*h\|_2\right)^2-(1-\delta_{2s})\|D_{T_{01}}^*h\|_2^2.$$
\end{lemma}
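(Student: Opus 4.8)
The plan is to derive Lemma~\ref{lemma4} as a straightforward corollary of the computation already carried out in the proof of Lemma~\ref{lemma3}, simply by not discarding one of the terms. Recall that in establishing Lemma~\ref{lemma3} we showed, via the polarization/$D$-RIP estimate of Lemma~\ref{lemma2} together with identity~(\ref{e9}), that
\[
\Bigl\|\sum_{j=2}^{l}ADD_{T_j}^*h\Bigr\|_2^2 \le \sum_{j=2}^{l}\|D_{T_j}^*h\|_2^2-\|D_{T_{01}}^*h\|_2^2+\|DD_{T_{01}}^*h\|_2^2 +\delta_{2s}\Bigl(\sum_{j=2}^{l}\|D_{T_j}^*h\|_2\Bigr)^2,
\]
which is exactly the first line~(\ref{e10}) of that proof. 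So the left side of Lemma~\ref{lemma4} is bounded by the right side of the above minus $\|ADD_{T_{01}}^*h\|_2^2$.

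First I would invoke the upper $D$-RIP bound on the $2s$-sparse vector $D_{T_{01}}^*h$, namely $\|ADD_{T_{01}}^*h\|_2^2 \le (1+\delta_{2s})\|DD_{T_{01}}^*h\|_2^2$; this is legitimate since $D_{T_{01}}^*h \in \Sigma_{2s}$. Wait---that inequality points the wrong way for subtracting. Instead I would use the lower $D$-RIP bound: $\|ADD_{T_{01}}^*h\|_2^2 \ge (1-\delta_{2s})\|DD_{T_{01}}^*h\|_2^2$, so that $-\|ADD_{T_{01}}^*h\|_2^2 \le -(1-\delta_{2s})\|DD_{T_{01}}^*h\|_2^2$. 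Substituting this into the bound for the left side gives
\[
\Bigl\|\sum_{j=2}^{l}ADD_{T_j}^*h\Bigr\|_2^2-\|ADD_{T_{01}}^*h\|_2^2 \le \sum_{j=2}^{l}\|D_{T_j}^*h\|_2^2 +\delta_{2s}\Bigl(\sum_{j=2}^{l}\|D_{T_j}^*h\|_2\Bigr)^2 - \|D_{T_{01}}^*h\|_2^2+\|DD_{T_{01}}^*h\|_2^2 -(1-\delta_{2s})\|DD_{T_{01}}^*h\|_2^2.
\]

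The last three terms simplify: $-\|D_{T_{01}}^*h\|_2^2 + \|DD_{T_{01}}^*h\|_2^2 - (1-\delta_{2s})\|DD_{T_{01}}^*h\|_2^2 = -\|D_{T_{01}}^*h\|_2^2 + \delta_{2s}\|DD_{T_{01}}^*h\|_2^2$. Now I would apply the tight-frame contraction $\|DD_{T_{01}}^*h\|_2 \le \|D_{T_{01}}^*h\|_2$ (used already in the proof of Lemma~\ref{lemma3}), but carefully: since $\delta_{2s}>0$ this gives $\delta_{2s}\|DD_{T_{01}}^*h\|_2^2 \le \delta_{2s}\|D_{T_{01}}^*h\|_2^2$, hence $-\|D_{T_{01}}^*h\|_2^2 + \delta_{2s}\|DD_{T_{01}}^*h\|_2^2 \le -(1-\delta_{2s})\|D_{T_{01}}^*h\|_2^2$. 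This yields precisely the claimed inequality.

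The only mild subtlety---and what I would flag as the main point to get right---is the direction of the two monotone substitutions: the subtracted term $\|ADD_{T_{01}}^*h\|_2^2$ must be bounded \emph{below} (forcing the use of the lower $D$-RIP constant), while the additive $\delta_{2s}\|DD_{T_{01}}^*h\|_2^2$ must be bounded \emph{above} by the frame inequality; mixing these up would produce a weaker or incorrect estimate. Everything else is bookkeeping already present in the previous lemma's proof, so no genuinely new obstacle arises.
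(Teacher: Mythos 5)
Your proof is correct and follows essentially the same route as the paper: bound the subtracted term from below via the lower $D$-RIP inequality $\|ADD_{T_{01}}^*h\|_2^2\geq(1-\delta_{2s})\|DD_{T_{01}}^*h\|_2^2$, plug in the intermediate bound (\ref{e10}) from Lemma \ref{lemma3}, and finish with the tight-frame contraction $\|DD_{T_{01}}^*h\|_2\leq\|D_{T_{01}}^*h\|_2$. The directional subtleties you flag are exactly the ones the paper's two-line computation handles implicitly, so nothing further is needed.
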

\begin{proof}By the definition of $\delta_{2s}$ and (\ref{e10}), we have
\bea&&\|\sum\limits_{j=2}^{l}ADD_{T_j}^*h\|_2^2-\|ADD_{T_{01}}^*h\|_2^2\\
&\leq&\|\sum\limits_{j=2}^{l}ADD_{T_j}^*h\|_2^2-(1-\delta_{2s})\|DD_{T_{01}}^*h\|_2^2\\
&\leq&\sum\limits_{j=2}^{l}\|D_{T_j}^*h\|_2^2
+\delta_{2s}\left(\sum\limits_{j=2}^{l}\|D_{T_j}^*h\|_2\right)^2+\delta_{2s}\|DD_{T_{01}}^*h\|_2^2-\|D_{T_{01}}^*h\|_2^2\\
&\leq&\sum\limits_{j=2}^{l}\|D_{T_j}^*h\|_2^2
+\delta_{2s}\left(\sum\limits_{j=2}^{l}\|D_{T_j}^*h\|_2\right)^2-(1-\delta_{2s})\|D_{T_{01}}^*h\|_2^2.\eea
\end{proof}

\section{Recovery via $l_1$-minimization}
In this section, we are concerned with the reconstruction of a
signal $f$ from $y=Af+z$ by the method of $l_1$-minimization:
$(P_{1,\varepsilon})$.

Let $h=\hat{f}-f$, where $\hat{f}$ is the solution of
$(P_{1,\varepsilon})$ and $f$ is the original signal. We use the
same assumptions as in Section 2. Furthermore, rearranging the
indices if necessary, we assume that the first $s$ coordinates of
$D^*f$ are the largest in magnitude and
$|x_{s+1}|\geq|x|_{s+2}\geq\cdots\geq|x_d|.$ For the rest of this
section, we will always assume that
$\|D_{T_1}^*h\|_1=\omega\sum_{j=1}^{l}\|D_{T_j}^*h\|_1$ for some
nonnegative number $\omega\in[0,1]$. Then we have
$\sum_{j=2}^{l}\|D_{T_j}^*h\|_1=(1-\omega)\sum_{j=1}^{l}\|D_{T_j}^*h\|_1$.

\begin{lemma}\label{lemma(mo1)}We have
$$\sum\limits_{j=2}^{l}\|D_{T_j}^*h\|_2^2\leq\frac{\omega(1-\omega)}{s}\left(\sum\limits_{j=1}^{l}\|D_{T_j}^*h\|_1\right)^2.$$
\end{lemma}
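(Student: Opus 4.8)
The plan is to exploit only the ordering of the transform coefficients, with no appeal to the $D$-RIP. By the running assumption of this section, within the blocks $T_1,T_2,\dots,T_l$ the entries of $D^*h$ are arranged in non-increasing order of magnitude, so every coordinate sitting in a block $T_j$ with $j\ge 2$ is dominated by every coordinate of the preceding block $T_{j-1}$, hence in particular by the average magnitude in that block.

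First I would record the pointwise comparison. Since each coordinate of $D_{T_j}^*h$ ($j\ge 2$) has magnitude at most the smallest magnitude occurring in $D_{T_{j-1}}^*h$, which is itself at most $\|D_{T_{j-1}}^*h\|_1/s$, we get
$$\|D_{T_j}^*h\|_\infty\ \le\ \frac{1}{s}\|D_{T_{j-1}}^*h\|_1,\qquad j=2,\dots,l.$$
The same ordering gives monotonicity of the block $\ell_1$-norms, namely $\|D_{T_j}^*h\|_1$ is non-increasing in $j\ge 1$ (shifting the window of $s$ consecutive sorted coordinates towards $T_1$ can only enlarge the sum), so $\|D_{T_{j-1}}^*h\|_1\le\|D_{T_1}^*h\|_1$ for all $j\ge 2$. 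Combining the two facts yields the uniform bound $\|D_{T_j}^*h\|_\infty\le\|D_{T_1}^*h\|_1/s$ for every $j\ge2$.

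Next I would bound each tail block in $\ell_2$ by the elementary interpolation $\|D_{T_j}^*h\|_2^2\le\|D_{T_j}^*h\|_\infty\,\|D_{T_j}^*h\|_1$, substitute the sup-norm estimate just obtained, and sum over $j=2,\dots,l$:
$$\sum_{j=2}^{l}\|D_{T_j}^*h\|_2^2\ \le\ \frac{\|D_{T_1}^*h\|_1}{s}\sum_{j=2}^{l}\|D_{T_j}^*h\|_1.$$
Finally I would insert the defining relations $\|D_{T_1}^*h\|_1=\omega\sum_{j=1}^{l}\|D_{T_j}^*h\|_1$ and $\sum_{j=2}^{l}\|D_{T_j}^*h\|_1=(1-\omega)\sum_{j=1}^{l}\|D_{T_j}^*h\|_1$, which immediately gives the asserted inequality.

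The one point that needs care — and the place where a careless estimate loses a factor — is keeping both factors of $\omega(1-\omega)$ simultaneously: one must control the \emph{pointwise} size of the tail blocks through $T_1$ (this produces the $\omega$) while still retaining the full $\ell_1$-mass $\sum_{j\ge 2}\|D_{T_j}^*h\|_1$ of those blocks (this produces the $1-\omega$). If instead one bounded $\|D_{T_j}^*h\|_2^2$ by $\|D_{T_{j-1}}^*h\|_1^2/s$ and summed, one would only reach the weaker constant $\omega$ in place of $\omega(1-\omega)$. All the remaining manipulations are routine.
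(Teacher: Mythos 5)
Your proof is correct and follows essentially the same route as the paper: both arguments rest on the interpolation $\|v\|_2^2\le\|v\|_\infty\|v\|_1$ applied to the tail blocks, the observation that every entry of $T_j$ ($j\ge2$) is dominated by the average magnitude $\|D_{T_1}^*h\|_1/s$ of the block $T_1$ (the paper phrases this as $|x_{2s+1}|\le\|D_{T_1}^*h\|_1/s$), and the defining relations for $\omega$. No substantive difference.
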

\begin{proof}By the simple inequality
$$\sum\limits_{j=1}^{d}|x_j|^2\leq\max\limits_{1\leq j\leq
d}|x_j|\sum\limits_{j=1}^{d}|x_j|,$$ we have
$$\sum\limits_{j=2}^{l}\|D_{T_j}^*h\|_2^2\leq |x_{2s+1}|\sum\limits_{j=2}^{l}\|D_{T_j}^*h\|_1
\leq
\frac{\omega(1-\omega)}{s}\left(\sum\limits_{j=1}^{l}\|D_{T_j}^*h\|_1\right)^2.$$
\end{proof}

\begin{lemma}\label{lemma(mo2)}We have
$$\sum\limits_{j=2}^{l}\|D_{T_j}^*h\|_2^2
+\delta_{2s}\left(\sum\limits_{j=2}^{l}\|D_{T_j}^*h\|_2\right)^2\leq\frac{\omega(1-\omega)+\delta_{2s}\left(1-{3\omega}/{4}\right)^2}{s}
\left(\sum\limits_{j=1}^{l}\|D_{T_j}^*h\|_1\right)^2.$$
\end{lemma}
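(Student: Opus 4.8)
The plan is to peel off the term $\delta_{2s}\bigl(\sum_{j=2}^{l}\|D_{T_j}^*h\|_2\bigr)^2$ and bound it directly, since the remaining term $\sum_{j=2}^{l}\|D_{T_j}^*h\|_2^2$ is already controlled by Lemma~\ref{lemma(mo1)}. Writing $S:=\sum_{j=1}^{l}\|D_{T_j}^*h\|_1$ and noting $1-3\omega/4\ge\tfrac14>0$ and $\delta_{2s}\ge0$, the asserted inequality will follow at once once we establish
\[
\sum_{j=2}^{l}\|D_{T_j}^*h\|_2\le\frac{1-3\omega/4}{\sqrt s}\,S .
\]
Since $\|D_{T_1}^*h\|_1=\omega S$, one has $(1-3\omega/4)S=\tfrac14\|D_{T_1}^*h\|_1+\sum_{j=2}^{l}\|D_{T_j}^*h\|_1$, so after multiplying by $\sqrt s$ this target is equivalent to
\[
\sum_{j=2}^{l}\Bigl(\sqrt s\,\|D_{T_j}^*h\|_2-\|D_{T_j}^*h\|_1\Bigr)\le\tfrac14\|D_{T_1}^*h\|_1 ,
\]
and this is the statement I would actually prove.

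First I would record the elementary inequality $\sqrt s\,\|u\|_2\le\|u\|_1+\tfrac s4\|u\|_\infty$ for every $u\in\mR^s$: after normalising $\|u\|_\infty=1$ and replacing entries by absolute values, $u_i^2\le u_i$ gives $\|u\|_2^2\le\|u\|_1$, whence $\sqrt s\,\|u\|_2\le\sqrt{s\|u\|_1}\le\|u\|_1+\tfrac s4$, the last step being $(\sqrt{\|u\|_1}-\tfrac{\sqrt s}{2})^2\ge0$. The decisive point is to sharpen this block by block. Fix $j\ge2$, put $c=\min_{k\in T_j}|x_k|$, and subtract $c$ from every magnitude of $D_{T_j}^*h$ to obtain a vector $v\ge0$ with $\|v\|_\infty=\max_{k\in T_j}|x_k|-c$; using $\|v\|_1\le\sqrt s\,\|v\|_2$ one checks that $s\|D_{T_j}^*h\|_2^2=s\|v\|_2^2+2sc\|v\|_1+s^2c^2\le(\sqrt s\,\|v\|_2+sc)^2$, so $\sqrt s\,\|D_{T_j}^*h\|_2\le sc+\sqrt s\,\|v\|_2$, while $\|D_{T_j}^*h\|_1=sc+\|v\|_1$, and therefore
\[
\sqrt s\,\|D_{T_j}^*h\|_2-\|D_{T_j}^*h\|_1\le\sqrt s\,\|v\|_2-\|v\|_1\le\tfrac s4\|v\|_\infty=\tfrac s4\Bigl(\max_{k\in T_j}|x_k|-\min_{k\in T_j}|x_k|\Bigr).
\]
Because the coordinates outside $T_0$ are ordered, $\min_{k\in T_j}|x_k|\ge\max_{k\in T_{j+1}}|x_k|$ for every $j$, so the right-hand side is at most $\tfrac s4\bigl(\max_{k\in T_j}|x_k|-\max_{k\in T_{j+1}}|x_k|\bigr)$ (with the convention $\max_{k\in T_{l+1}}|x_k|:=0$).

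Summing this over $j=2,\dots,l$ makes the right-hand side telescope to $\tfrac s4\max_{k\in T_2}|x_k|=\tfrac s4|x_{2s+1}|$, and then $|x_{2s+1}|\le\min_{k\in T_1}|x_k|\le\tfrac1s\|D_{T_1}^*h\|_1$ gives $\sum_{j=2}^{l}\bigl(\sqrt s\,\|D_{T_j}^*h\|_2-\|D_{T_j}^*h\|_1\bigr)\le\tfrac14\|D_{T_1}^*h\|_1$, which is exactly what was needed. I expect the per-block sharpening to be the crux: the naive estimate $\sqrt s\,\|D_{T_j}^*h\|_2-\|D_{T_j}^*h\|_1\le\tfrac s4\max_{k\in T_j}|x_k|$ is hopeless after summation — its sum is only controlled by $S$, not by $\omega S$ — and it is precisely the reduction by the block minimum, combined with the interlacing $\min_{T_j}\ge\max_{T_{j+1}}$ of consecutive blocks, that turns it into a telescoping series; the constant $\tfrac14$ in the elementary inequality is exactly what produces the $3\omega/4$ in the statement.
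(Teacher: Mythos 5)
Your proof is correct and follows essentially the same route as the paper: the paper also reduces the lemma to the bound $\sqrt{s}\sum_{j\ge2}\|D_{T_j}^*h\|_2\le(1-3\omega/4)\sum_{j\ge1}\|D_{T_j}^*h\|_1$, obtained from the per-block inequality $\sqrt{s}\,\|D_{T_j}^*h\|_2\le\|D_{T_j}^*h\|_1+\tfrac{s}{4}(|x_{js+1}|-|x_{js+s}|)$ followed by the same telescoping and the estimate $s|x_{2s+1}|\le\|D_{T_1}^*h\|_1$, then combines with Lemma \ref{lemma(mo1)}. The only difference is that the paper simply cites this per-block inequality as \cite[Proposition 1]{Cai}, whereas you supply a self-contained (and correct) proof of it via the shift-by-the-block-minimum argument.
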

\begin{proof}By \cite[Proposition 1]{Cai}, we have
$$s^{1/2}\|D_{T_j}^*h\|_2\leq \|D_{T_j}^*h\|_1+s(|x_{js+1}|-|x_{js+s}|)/4,\quad
j=2,\cdots,l.$$ Therefore, we have \bea
s^{1/2}\sum\limits_{j=2}^{l}\|D_{T_j}^*h\|_2\leq
\sum\limits_{j=2}^{l}\|D_{T_j}^*h\|_1+s|x_{2s+1}|/4\leq
\sum\limits_{j=2}^{l}\|D_{T_j}^*h\|_1
+\|D_{T_1}^*h\|_1/4=(1-3\omega/4)\sum\limits_{j=1}^{l}\|D_{T_j}^*h\|_1.
\eea Combining the above inequality with Lemma \ref{lemma(mo1)}, one
can finish the proof.
\end{proof}

Since $\hat{f}$ is a minimizer of $(P_{1,\varepsilon})$, one gets
that
$$\|D^*f\|_1\geq\|D^*\hat{f}\|_1.$$ That is
$$\|D^*_Tf\|_1+\|D^*_{T^c}f\|_1\geq\|D^*_T\hat{f}\|_1+\|D^*_{T^c}\hat{f}\|_1.$$
Thus
$$\|D^*_Tf\|_1+\|D^*_{T^c}f\|_1\geq\|D^*_Tf\|_1-\|D^*_Th\|_1+\|D^*_{T^c}h\|_1-\|D^*_{T^c}f\|_1.$$
This implies \be\label{cone constraint1}
\sum\limits_{j=1}^{l}\|D^*_{T_j}h\|_1\leq2\|D^*_{T^c}f\|_1+\|D^*_Th\|_1.\ee
According to the feasibility of $\hat{f}$, $Ah$ must be small:
\be\label{Ahbound1}
\|Ah\|_2=\|Af-A\hat{f}\|_2\leq\|Af-y\|_2+\|A\hat{f}-y\|_2\leq2\varepsilon.\ee

\subsection{General signal recovery} For $\delta_{2s}<2/3$, denote that
\be\label{e11}\rho_s=\sqrt{\frac{4(1+5\delta_{2s}-4\delta_{2s}^2)}{(1-\delta_{2s})(32-25\delta_{2s})}}.\ee
By a easy computation, one can show that if
$\delta_{2s}<(77-\sqrt{1337})/82\approx0.4931$, then $\rho_s<1$.

\begin{lemma}\label{lemma6}If $\delta_{2s}<0.4931$, then
\be\label{e6}\sum\limits_{j=1}^{l}\|D_{T_j}^*h\|_1\leq
\frac{2}{1-\rho_s}\|D^*f-(D^*f)_{[s]}\|_1+\frac{2\sqrt{2}}{(1-\rho_s)\sqrt{1-\delta_{2s}}}\sqrt{s}\varepsilon.\ee
\end{lemma}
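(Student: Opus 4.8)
The plan is to combine the geometric consequences of the $D$-RIP (Lemmas~\ref{lemma3} and~\ref{lemma4}) with the $l_1$-cone constraint \eqref{cone constraint1} and the combinatorial bounds of Lemmas~\ref{lemma(mo1)} and~\ref{lemma(mo2)}. First I would write $h = \hat f - f$, expand $Ah = ADD^*h = ADD^*_{T_{01}}h + \sum_{j\ge 2}ADD^*_{T_j}h$, and use the feasibility bound \eqref{Ahbound1}. The key algebraic move is to take the inner product of $ADD^*_{T_{01}}h$ with $Ah$: on the one hand $\langle ADD^*_{T_{01}}h, Ah\rangle \le \|ADD^*_{T_{01}}h\|_2\,\|Ah\|_2 \le \|ADD^*_{T_{01}}h\|_2\cdot 2\varepsilon$, which by the upper $D$-RIP bound is at most $2\sqrt{1+\delta_{2s}}\,\varepsilon\,\|D^*_{T_{01}}h\|_2$; on the other hand, expanding $Ah$ into its two pieces gives $\langle ADD^*_{T_{01}}h, Ah\rangle = \|ADD^*_{T_{01}}h\|_2^2 + \langle ADD^*_{T_{01}}h, \sum_{j\ge 2}ADD^*_{T_j}h\rangle \ge (1-\delta_{2s})\|D^*_{T_{01}}h\|_2^2 - |\langle ADD^*_{T_{01}}h, \sum_{j\ge 2}ADD^*_{T_j}h\rangle|$. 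Bounding the cross term via the parallelogram-type estimate $2|\langle u,v\rangle| \le \|u+v\|_2^2 - \|u\|_2^2 - \|v\|_2^2$ or directly through Lemmas~\ref{lemma3}--\ref{lemma4}, one gets a quadratic inequality relating $\|D^*_{T_{01}}h\|_2$ to $\sum_{j\ge 2}\|D^*_{T_j}h\|_2$ and $\varepsilon$.

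Next I would insert the $l_1$-to-$l_2$ bounds: Lemma~\ref{lemma(mo2)} controls $\sum_{j\ge2}\|D^*_{T_j}h\|_2^2 + \delta_{2s}(\sum_{j\ge2}\|D^*_{T_j}h\|_2)^2$ by $\frac{\omega(1-\omega)+\delta_{2s}(1-3\omega/4)^2}{s}\big(\sum_{j=1}^l\|D^*_{T_j}h\|_1\big)^2$, and since $\|D^*_T h\|_1 \le \|D^*_{T_0}h\|_1 \le \sqrt{s}\,\|D^*_{T_0}h\|_2 \le \sqrt{s}\,\|D^*_{T_{01}}h\|_2$, the cone constraint \eqref{cone constraint1} reads $\sum_{j=1}^l\|D^*_{T_j}h\|_1 \le 2\|D^*f-(D^*f)_{[s]}\|_1 + \sqrt{s}\,\|D^*_{T_{01}}h\|_2$. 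Substituting everything, the quadratic inequality becomes one purely in the unknown $N := \|D^*_{T_{01}}h\|_2$ (or more conveniently in $\sum_j\|D^*_{T_j}h\|_1$), with coefficients depending on $\omega$ and $\delta_{2s}$; solving it and then maximizing the relevant coefficient over $\omega\in[0,1]$ produces the constant $\rho_s$ in \eqref{e11}. The maximization over $\omega$ of $\omega(1-\omega)+\delta_{2s}(1-3\omega/4)^2$ is where the specific numbers $32-25\delta_{2s}$ and $1+5\delta_{2s}-4\delta_{2s}^2$ come from, and the threshold $\delta_{2s}<(77-\sqrt{1337})/82$ is exactly the condition $\rho_s<1$, which lets us move the $\rho_s\sum_j\|D^*_{T_j}h\|_1$ term to the left-hand side and divide by $1-\rho_s$.

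The main obstacle I anticipate is bookkeeping the constants through the quadratic inequality and correctly identifying the worst-case $\omega$: one has to be careful that the factor multiplying $\big(\sum_j\|D^*_{T_j}h\|_1\big)^2$ after dividing by $s$ is bounded by $\rho_s^2$ uniformly in $\omega$, and that the term $\sqrt{s}\,\|D^*_{T_{01}}h\|_2$ inserted from the cone constraint interacts correctly with the $(1-\delta_{2s})\|D^*_{T_{01}}h\|_2^2$ lower bound — essentially one completes a square in $\|D^*_{T_{01}}h\|_2$. A secondary technicality is tracking the $\varepsilon$-term: the $2\sqrt{1+\delta_{2s}}\,\varepsilon\,\|D^*_{T_{01}}h\|_2$ contribution must be absorbed using the arithmetic–geometric inequality so that it ends up as an additive $\frac{2\sqrt2}{(1-\rho_s)\sqrt{1-\delta_{2s}}}\sqrt s\,\varepsilon$ rather than getting entangled with the quadratic terms. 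Once these are handled, \eqref{e6} follows by rearranging and dividing by $1-\rho_s$.
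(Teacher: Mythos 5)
Your high-level plan --- derive an inequality of the form $\|D^*_{T}h\|_1\le \rho_s\sum_{j\geq1}\|D^*_{T_j}h\|_1+C\varepsilon$ from the $D$-RIP together with Lemmas \ref{lemma(mo1)}--\ref{lemma(mo2)}, then close it with the cone constraint (\ref{cone constraint1}) --- is exactly the paper's strategy, and you correctly locate where $\rho_s$ and the threshold $(77-\sqrt{1337})/82$ come from. However, the ``key algebraic move'' you propose does not go through with only $\delta_{2s}$. The cross term $\langle ADD_{T_{01}}^*h,\sum_{j\geq2}ADD_{T_j}^*h\rangle$ pairs the $2s$-sparse vector $D_{T_{01}}^*h$ with $s$-sparse blocks, so the polarization estimate of Lemma \ref{lemma2} applied to each summand would require $\delta_{3s}$ rather than $\delta_{2s}$; worse, unlike the orthonormal case the residual frame inner products $\langle DD_{T_{01}}^*h,DD_{T_j}^*h\rangle$ do not vanish and admit no useful termwise bound. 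The paper never forms this cross term: it writes $ADD_{T_{01}}^*h=Ah-\sum_{j\geq2}ADD_{T_j}^*h$, applies the triangle inequality to get $\|ADD_{T_{01}}^*h\|_2\le 2\varepsilon+\|\sum_{j\geq2}ADD_{T_j}^*h\|_2$, and then invokes Lemmas \ref{lemma3} and \ref{lemma4}, whose proofs only ever pair $s$-sparse blocks with each other and absorb all the frame inner products \emph{collectively} through the tight-frame identity $\sum_{j}DD_{T_j}^*h=h$ as in (\ref{e9}). This is what produces the inequality $(1-\delta_{2s})\|D_{T_{01}}^*h\|_2^2\le(2\varepsilon+\mathcal{N})^2$ on which your quadratic step must rest; your hedge ``or directly through Lemmas \ref{lemma3}--\ref{lemma4}'' is in fact the only viable option, not an alternative.

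Two further places where your bookkeeping would not reproduce the stated constants. First, your Cauchy--Schwarz step puts a factor $\sqrt{1+\delta_{2s}}$ in front of $\varepsilon$ and leaves it multiplied by $\|D_{T_{01}}^*h\|_2$, which after completing the square gives a different (worse) coefficient than the $2\sqrt{2}/\bigl((1-\rho_s)\sqrt{1-\delta_{2s}}\bigr)$ in (\ref{e6}); the paper's route gives $\sqrt{1-\delta_{2s}}\,\|D_{T_{01}}^*h\|_2\le 2\varepsilon+\mathcal{N}$ directly. Second, replacing $\|D^*_Th\|_1$ by $\sqrt{s}\,\|D_{T_{01}}^*h\|_2$ in the cone constraint discards the $T_1$ contribution; the paper instead keeps $\|D_{T_{01}}^*h\|_2^2\ge(\|D_{T_0}^*h\|_1^2+\|D_{T_1}^*h\|_1^2)/s$ and carries the term $-(1-\delta_{2s})\|D_{T_1}^*h\|_1^2=-(1-\delta_{2s})\omega^2(\sum_j\|D_{T_j}^*h\|_1)^2$ into the maximization over $\omega$. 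Without that subtraction the optimal value is $4(1+\delta_{2s})/(16-9\delta_{2s})$ instead of $4(1+5\delta_{2s}-4\delta_{2s}^2)/(32-25\delta_{2s})$, the resulting $\rho_s$ is strictly larger, and the admissible range of $\delta_{2s}$ shrinks below $0.4931$. So the skeleton is right, but both the central estimate and the sharp constants require the paper's specific decomposition.
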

\begin{proof}By $Ah=\sum_{j=0}^{l}ADD_{T_j}^*h$ and (\ref{Ahbound1}), we have
\begin{eqnarray}0&=&\|Ah-\sum_{j=2}^{l}ADD_{T_j}^*h\|_2^2-\|ADD_{T_{01}}^*h\|_2^2\nonumber\\
&\leq&(2\varepsilon+\|\sum_{j=2}^{l}ADD_{T_j}^*h\|_2)^2-\|ADD_{T_{01}}^*h\|_2^2\nonumber\\
&=&4\varepsilon^2+4\varepsilon\|\sum_{j=2}^{l}ADD_{T_j}^*h\|_2+\|\sum_{j=2}^{l}ADD_{T_j}^*h\|_2^2-\|ADD_{T_{01}}^*h\|_2^2\nonumber
\end{eqnarray}
Applying lemmas \ref{lemma3} and \ref{lemma4} to the above
inequality yields
\be\label{e5}\|D_{T_{01}}^*h\|_2^2\leq\frac{(2\varepsilon+\mathcal{N})^2}{1-\delta_{2s}},\ee
where
$$\mathcal{N}=\sqrt{\sum\limits_{j=2}^{l}\|D_{T_j}^*h\|_2^2
+\delta_{2s}\left(\sum\limits_{j=2}^{l}\|D_{T_j}^*h\|_2\right)^2}.$$
Using
\be\label{e8}\|D_{T_{01}}^*h\|_2^2=\|D_{T_{0}}^*h\|_2^2+\|D_{T_{1}}^*h\|_2^2
\geq\frac{\|D_{T_{0}}^*h\|_1^2+\|D_{T_{1}}^*h\|_1^2}{s}\ee to
(\ref{e5}), one can get
\begin{eqnarray}\label{e12}\|D_{T_{0}}^*h\|_1^2&\leq&\frac{s(2\varepsilon+\mathcal{N})^2-(1-\delta_{2s})\|D_{T_{1}}^*h\|_1^2}{1-\delta_{2s}}\nonumber\\
&\leq&\frac{(2\sqrt{2s}\varepsilon)^2+2\cdot2\sqrt{2s}\varepsilon\cdot\sqrt{s/2}\mathcal{N}+s\mathcal{N}^2-(1-\delta_{2s})\|D_{T_{1}}^*h\|_1^2}{1-\delta_{2s}}.\end{eqnarray}
Notice that by Lemma \ref{lemma(mo2)}, we have \bea
\sqrt{\frac{s}{2}}\mathcal{N}
&\leq&\sqrt{\frac{\omega(1-\omega)+\delta_{2s}\left(1-{3\omega}/{4}\right)^2}{2}}\sum\limits_{j=1}^{l}\|D_{T_j}^*h\|_1\\
&\leq&\sqrt{\frac{4(1+5\delta_{2s}-4\delta_{2s}^2)}{(32-25\delta_{2s})}}\sum\limits_{j=1}^{l}\|D_{T_j}^*h\|_1
\eea and \bea s\mathcal{N}^2-(1-\delta_{2s})\|D_{T_{1}}^*h\|_1^2
&\leq&[\omega(1-\omega)+\delta_{2s}\left(1-{3\omega}/{4}\right)^2-(1-\delta_{2s})\omega^2]
\left(\sum\limits_{j=1}^{l}\|D_{T_j}^*h\|_1\right)^2\\
&\leq&{\frac{4(1+5\delta_{2s}-4\delta_{2s}^2)}{(32-25\delta_{2s})}}
\left(\sum\limits_{j=1}^{l}\|D_{T_j}^*h\|_1\right)^2,\eea where we
have used the fact for all $\delta_{2s}\in [0,2/3)$,
\bea\max_{\omega\in
[0,1]}\frac{\omega(1-\omega)+\delta_{2s}\left(1-{3\omega}/{4}\right)^2}{2}
&\leq&\frac{\omega(1-\omega)+\delta_{2s}\left(1-{3\omega}/{4}\right)^2}{2}\bigg\rvert_{\omega=\frac{4(2-3\delta_{2s})}{16-9\delta_{2s}}}\\
&=&\frac{2(1+\delta_{2s})}{16-9\delta_{2s}} \leq
\frac{4(1+5\delta_{2s}-4\delta_{2s}^2)}{(32-25\delta_{2s})}\eea and
\bea\max_{\omega\in
[0,1]}\omega(1-\omega)+\delta_{2s}\left(1-{3\omega}/{4}\right)^2-(1-\delta_{2s})\omega^2
\leq\frac{4(1+5\delta_{2s}-4\delta_{2s}^2)}{(32-25\delta_{2s})}.\eea
Thus, it follows from the above two inequalities and (\ref{e12})
that
$$\|D^*_Th\|_1\leq\frac{2\sqrt{2s}\varepsilon}{\sqrt{1-\delta_{2s}}}
+\rho_s \sum\limits_{j=1}^{l}\|D^*_{T_j}h\|_1.$$ Combining with
(\ref{cone constraint1}), one can finish the proof.\end{proof}

The main result of this subsection is the following theorem.
\begin{thm}\label{thm5}If $\delta_{2s}<0.4931$, then \be\label{result(1)}\|\hat{f}-f\|_2\leq C_0\frac{\|D^*f-(D^*f)_{[s]}\|_1}{\sqrt{s}}
+C_1\varepsilon,\ee where
$$C_0=\frac{4}{1-\rho_s}\sqrt{\frac{2(2-\delta_{2s})}{(1-\delta_{2s})(32-25\delta_{2s})}},\quad
C_1=\frac{2}{\sqrt{1-\delta_{2s}}}\left(1+\frac{C_0}{\sqrt{2}}\right)$$
and
$$\rho_s=\sqrt{\frac{4(1+5\delta_{2s}-4\delta_{2s}^2)}{(1-\delta_{2s})(32-25\delta_{2s})}}.$$
\end{thm}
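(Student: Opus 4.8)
The plan is to combine the $\ell_1$-cone estimate from Lemma~\ref{lemma6} with a direct $\ell_2$-norm bound on $h=\hat f-f$ obtained by controlling $\|D^*_{T_{01}}h\|_2$ and the tail $\sum_{j\ge 2}\|D^*_{T_j}h\|_2$ separately. First I would recall the decomposition $\|h\|_2^2 = \sum_{j=0}^{l}\|D^*_{T_j}h\|_2^2$ (valid since $D$ is a tight frame, so $\|h\|_2 = \|D^*h\|_2$), and split it as $\|h\|_2^2 = \|D^*_{T_{01}}h\|_2^2 + \sum_{j=2}^{l}\|D^*_{T_j}h\|_2^2$. The second term is already handled by Lemma~\ref{lemma(mo1)}, which gives $\sum_{j=2}^{l}\|D^*_{T_j}h\|_2^2 \le \frac{\omega(1-\omega)}{s}\big(\sum_{j=1}^{l}\|D^*_{T_j}h\|_1\big)^2 \le \frac{1}{4s}\big(\sum_{j=1}^{l}\|D^*_{T_j}h\|_1\big)^2$, using $\omega(1-\omega)\le 1/4$.

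Next I would bound $\|D^*_{T_{01}}h\|_2$. Inequality~(\ref{e5}) in the proof of Lemma~\ref{lemma6} already gives $\|D^*_{T_{01}}h\|_2 \le \frac{2\varepsilon + \mathcal N}{\sqrt{1-\delta_{2s}}}$, where $\mathcal N = \sqrt{\sum_{j=2}^{l}\|D^*_{T_j}h\|_2^2 + \delta_{2s}\big(\sum_{j=2}^{l}\|D^*_{T_j}h\|_2\big)^2}$. By Lemma~\ref{lemma(mo2)} together with the maximization over $\omega\in[0,1]$ carried out in Lemma~\ref{lemma6}, one has $\mathcal N \le \sqrt{2/s}\,\sqrt{\frac{4(1+5\delta_{2s}-4\delta_{2s}^2)}{32-25\delta_{2s}}}\,\sum_{j=1}^{l}\|D^*_{T_j}h\|_1$; I would package the constant so that $\sqrt{s/2}\,\mathcal N \le \tfrac12\sqrt{(1-\delta_{2s})}\,\rho_s \sum_{j=1}^{l}\|D^*_{T_j}h\|_1$ using the definition~(\ref{e11}) of $\rho_s$. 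Plugging these into the splitting of $\|h\|_2^2$ and using $(a+b)^2\le \tfrac{2(2-\delta_{2s})}{?}\cdots$ — more precisely, bounding $\|h\|_2^2 \le \|D^*_{T_{01}}h\|_2^2 + \tfrac14\big(\tfrac{1}{\sqrt s}\sum_j\|D^*_{T_j}h\|_1\big)^2$ and then $\|D^*_{T_{01}}h\|_2 \le \tfrac{1}{\sqrt{1-\delta_{2s}}}\big(2\varepsilon + \sqrt{2/s}\cdot(\text{const})\sum_j\|D^*_{T_j}h\|_1\big)$ — yields, after collecting terms, an estimate of the form $\|h\|_2 \le \alpha\,\tfrac{1}{\sqrt s}\sum_{j=1}^{l}\|D^*_{T_j}h\|_1 + \beta\varepsilon$ with $\alpha,\beta$ explicit functions of $\delta_{2s}$ matching the claimed $C_0,C_1$ (the $\sqrt{2(2-\delta_{2s})}$ factor comes from combining the tail's $\tfrac14$ with the $\tfrac{2\delta_{2s}}{32-25\delta_{2s}}$-type term inside $\mathcal N^2/s$ under a single square root).

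Finally I would substitute the conclusion of Lemma~\ref{lemma6}, namely $\sum_{j=1}^{l}\|D^*_{T_j}h\|_1 \le \frac{2}{1-\rho_s}\|D^*f-(D^*f)_{[s]}\|_1 + \frac{2\sqrt2}{(1-\rho_s)\sqrt{1-\delta_{2s}}}\sqrt s\,\varepsilon$, into $\|h\|_2 \le \alpha\,\tfrac{1}{\sqrt s}\sum_j\|D^*_{T_j}h\|_1 + \beta\varepsilon$; the $\tfrac{1}{\sqrt s}$ cancels the $\sqrt s$ in the noise term, and rearranging produces exactly $C_0 = \frac{4}{1-\rho_s}\sqrt{\frac{2(2-\delta_{2s})}{(1-\delta_{2s})(32-25\delta_{2s})}}$ and $C_1 = \frac{2}{\sqrt{1-\delta_{2s}}}\big(1+\frac{C_0}{\sqrt2}\big)$. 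Since $\delta_{2s}<0.4931$ guarantees $\rho_s<1$, all constants are finite and positive.

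The main obstacle I anticipate is purely bookkeeping: tracking the several absolute constants ($1/4$ from $\omega(1-\omega)$, the $\omega$-maximization value $\tfrac{4(1+5\delta_{2s}-4\delta_{2s}^2)}{32-25\delta_{2s}}$, the factors of $2$ and $\sqrt2$ from $\varepsilon$-splitting and from $\|D^*_{T_{01}}h\|_2^2 = \|D^*_{T_0}h\|_2^2+\|D^*_{T_1}h\|_2^2$) so that they assemble into the precise closed forms for $C_0$ and $C_1$ rather than merely into some constant depending on $\delta_{2s}$. The analytic content — the cone constraint, the $D$-RIP quadratic-form manipulations, and the $\ell_1$–$\ell_2$ interpolation of Lemma~\ref{lemma(mo2)} — is already in place from the lemmas; the theorem is essentially the act of feeding Lemma~\ref{lemma6} back into the near-tight $\ell_2$ bound.
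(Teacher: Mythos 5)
Your overall architecture is exactly the paper's: split $\|h\|_2^2=\|D^*h\|_2^2=\|D_{T_{01}}^*h\|_2^2+\sum_{j\ge2}\|D_{T_j}^*h\|_2^2$, control the first piece via inequality (\ref{e5}) and the tail via Lemma \ref{lemma(mo1)}, bound $\mathcal N$ via Lemma \ref{lemma(mo2)}, and then feed in Lemma \ref{lemma6}. The final assembly of $C_1$ from $C_0$ is also right. However, there is a concrete error in how you assemble the constants, and it prevents you from reaching the stated $C_0$. You bound the tail by $\max_\omega\omega(1-\omega)=1/4$ and, separately, bound $\mathcal N^2$ using the $\omega$-maximum $\frac{8(1+5\delta_{2s}-4\delta_{2s}^2)}{32-25\delta_{2s}}$ imported from Lemma \ref{lemma6}. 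These two maxima are attained at different values of $\omega$ (at $\omega=1/2$ and at $\omega=\frac{4(2-3\delta_{2s})}{16-9\delta_{2s}}$ respectively), so summing them overshoots. Numerically, at $\delta_{2s}=0.4$ your route gives a coefficient $\sqrt{\frac{8(1+5\delta-4\delta^2)}{(1-\delta)(32-25\delta)}+\frac14}\approx1.30$ in front of $s^{-1/2}\sum_j\|D_{T_j}^*h\|_1$, whereas the claimed $C_0$ requires $2\sqrt{\frac{2(2-\delta)}{(1-\delta)(32-25\delta)}}\approx0.985$. So your steps prove a valid oracle inequality, but with a strictly larger constant than the theorem asserts.

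The missing idea is to keep $\omega$ symbolic in \emph{both} lemma bounds and optimize once at the end. Adding the two bounds gives
\begin{equation*}
\frac{1}{1-\delta_{2s}}\Bigl[\mathcal N^2\Bigr]+\sum_{j\ge2}\|D_{T_j}^*h\|_2^2
\le\frac{(2-\delta_{2s})\omega(1-\omega)+\delta_{2s}(1-3\omega/4)^2}{(1-\delta_{2s})\,s}\Bigl(\sum_{j=1}^{l}\|D_{T_j}^*h\|_1\Bigr)^2,
\end{equation*}
and the single quadratic $(2-\delta_{2s})\omega(1-\omega)+\delta_{2s}(1-3\omega/4)^2$ has maximum exactly $\frac{8(2-\delta_{2s})}{32-25\delta_{2s}}$ over $\omega\in[0,1]$, which is where the factor $\sqrt{2(2-\delta_{2s})}$ in $C_0$ actually comes from (not from ``combining the tail's $1/4$ with a $\delta_{2s}$-type term''). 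Also a small slip: from Lemma \ref{lemma6} one gets $\sqrt{s/2}\,\mathcal N\le\sqrt{1-\delta_{2s}}\,\rho_s\sum_j\|D_{T_j}^*h\|_1$, without your extra factor $\tfrac12$. With the joint maximization in place, the rest of your plan goes through and yields the stated $C_0$ and $C_1$.
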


\begin{proof}

By (\ref{e5}), we have
\bea&&\|h\|_2^2=\|D^*h\|_2^2=\|D_{T_{01}}^*h\|_2^2+\sum\limits_{j=2}^{l}\|D_{T_j}^*h\|_2^2\\
&\leq&\frac{1}{1-\delta_{2s}}\left(2\varepsilon+\sqrt{\sum\limits_{j=2}^{l}\|D_{T_j}^*h\|_2^2
+\delta_{2s}\left(\sum\limits_{j=2}^{l}\|D_{T_j}^*h\|_2\right)^2}\right)^2
+\sum\limits_{j=2}^{l}\|D_{T_j}^*h\|_2^2.\eea Hence, we have
\be\label{e16}\|h\|_2\leq\frac{2\varepsilon}{\sqrt{1-\delta_{2s}}}
+\sqrt{\frac{1}{(1-\delta_{2s})}\left[\sum\limits_{j=2}^{l}\|D_{T_j}^*h\|_2^2
+\delta_{2s}\left(\sum\limits_{j=2}^{l}\|D_{T_j}^*h\|_2\right)^2\right]+\sum\limits_{j=2}^{l}\|D_{T_j}^*h\|_2^2}.\ee
Then by lemmas \ref{lemma(mo1)} and \ref{lemma(mo2)}, we have
\bea&&\|h\|_2\leq\frac{2\varepsilon}{\sqrt{1-\delta_{2s}}}
+\frac{\sqrt{(2-\delta_{2s})\omega(1-\omega)+\delta_{2s}(1-3\omega/4)^2}}{\sqrt{s}\sqrt{(1-\delta_{2s})}}
\sum\limits_{j=1}^{l}\|D_{T_j}^*h\|_1.\eea A direct calculation
shows that
$$\max\limits_{0<\omega\leq1}(2-\delta_{2s})\omega(1-\omega)+\delta_{2s}(1-3\omega/4)^2\leq\frac{8(2-\delta_{2s})}{32-25\delta_{2s}}.$$
It follows from the above two inequalities that
$$\|h\|_2\leq\frac{2\varepsilon}{\sqrt{1-\delta_{2s}}}
+\frac{2}{\sqrt{s}}\sqrt{\frac{2(2-\delta_{2s})}{(1-\delta_{2s})(32-25\delta_{2s})}}
\sum\limits_{j=1}^{l}\|D_{T_j}^*h\|_1.$$ Therefore, with the above
inequality and Lemma \ref{lemma6} we prove the result.
\end{proof}
\subsection{Special case: $ n\leq 4s$ }
For $\delta_{2s}\in [0,1)$, denote that
\be\label{e14}\rho_s=\sqrt{\frac{(1+\delta_{2s})^2}{8(1-\delta_{2s})}}.\ee
By a easy computation, one can show that if
$\delta_{2s}<4\sqrt{2}-5\approx0.656,$ $\rho_s<1$.

We have $l\leq3$ by $n\leq 4s.$ For simplicity, we assume that
$l=3$. Instead of lemmas \ref{lemma3} and \ref{lemma4}, we have the
following results.
\begin{lemma}\label{lemma7}We have
$$\|ADD_{T_{23}}^*h\|_2^2\leq(1+\delta_{2s})\|DD_{T_{23}}^*h\|_2^2\leq (1+\delta_{2s})\|D_{T_{23}}^*h\|_2^2.$$
\end{lemma}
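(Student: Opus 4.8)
The plan is to prove Lemma \ref{lemma7} by a direct application of the $D$-RIP definition together with the tight-frame contraction property $\|DD_T^* h\|_2 \le \|D_T^* h\|_2$ that was already exploited in the proof of Lemma \ref{lemma3}. First I would observe that $D_{T_{23}}^* h$ is supported on $T_2 \cup T_3$, which has cardinality at most $2s$; hence $D_{T_{23}}^* h \in \Sigma_{2s}$, and the $D$-RIP estimate with order $2s$ applies to the vector $v = D_{T_{23}}^* h$. This immediately gives
\[
\|ADD_{T_{23}}^*h\|_2^2 = \|AD v\|_2^2 \le (1+\delta_{2s})\|D v\|_2^2 = (1+\delta_{2s})\|DD_{T_{23}}^*h\|_2^2,
\]
which is the first inequality claimed.

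For the second inequality, I would note that $D$ is a tight frame, so $D D^* = I$ on $\mR^n$ and $\|D\|$ (spectral norm) equals $1$; consequently $\|D w\|_2 \le \|w\|_2$ for every $w \in \mR^d$. Applying this with $w = D_{T_{23}}^* h$ gives $\|DD_{T_{23}}^*h\|_2^2 \le \|D_{T_{23}}^*h\|_2^2$. Multiplying through by the nonnegative constant $(1+\delta_{2s})$ and chaining the two bounds completes the proof.

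There is essentially no obstacle here: the only point requiring a moment's care is the bookkeeping that $T_2 \cup T_3$ has size at most $2s$ (so that the order-$2s$ $D$-RIP constant, which is the one being tracked throughout Section 3, is the relevant one), and that $\|D\| \le 1$ follows from the tight-frame normalization $\|f\|_2^2 = \sum_k |\langle f, D_k\rangle|^2$, i.e.\ $\|D^* f\|_2 = \|f\|_2$, hence $\|D\| = \|D^*\| = 1$. Both facts have already been used implicitly in the preceding lemmas, so the write-up can be kept to two or three lines.
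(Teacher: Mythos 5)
Your proof is correct and is exactly the argument the paper intends (the paper dismisses it as ``straightforward''): the first inequality is the upper $D$-RIP bound applied to the $2s$-sparse vector $D_{T_{23}}^*h$, and the second is the contraction $\|Dw\|_2\leq\|w\|_2$ coming from $DD^*=I$, already used in Lemma \ref{lemma3}. Nothing is missing.
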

\begin{proof}The proof is straightforward.\end{proof}
\begin{lemma}\label{lemma8}We have
$$\|ADD_{T_{23}}^*h\|_2^2-\|ADD_{T_{01}}^*h\|_2^2\leq(1+\delta_{2s})\|D_{T_{23}}^*h\|_2^2-(1-\delta_{2s})\|D_{T_{01}}^*h\|_2^2.$$
\end{lemma}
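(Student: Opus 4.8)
The plan is to mirror the proof of Lemma~\ref{lemma4}, taking advantage of the fact that in the regime $l=3$ the tail block $D_{T_{23}}^*h$ is itself $2s$-sparse, so the $D$-RIP applies to it directly (exactly as in Lemma~\ref{lemma7}) and the cross-term estimate of Lemma~\ref{lemma2} is no longer needed. Thus the whole argument reduces to two one-sided $D$-RIP bounds plus the bookkeeping identity already used to prove (\ref{e9}).

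First I would apply the one-sided $D$-RIP bounds to the two $2s$-sparse vectors $D_{T_{23}}^*h$ and $D_{T_{01}}^*h$: the upper bound $\|ADD_{T_{23}}^*h\|_2^2\le(1+\delta_{2s})\|DD_{T_{23}}^*h\|_2^2$ coming from Lemma~\ref{lemma7}, and the lower bound $\|ADD_{T_{01}}^*h\|_2^2\ge(1-\delta_{2s})\|DD_{T_{01}}^*h\|_2^2$ coming directly from the definition of $\delta_{2s}$. Subtracting yields
$$\|ADD_{T_{23}}^*h\|_2^2-\|ADD_{T_{01}}^*h\|_2^2\le(1+\delta_{2s})\|DD_{T_{23}}^*h\|_2^2-(1-\delta_{2s})\|DD_{T_{01}}^*h\|_2^2.$$
Next I would pass to coefficient norms exactly as in (\ref{e9}): since $D$ is a tight frame, $h=DD^*h=DD_{T_{01}}^*h+DD_{T_{23}}^*h$, the supports $T_{01}$ and $T_{23}$ are disjoint so $\|h\|_2^2=\|D_{T_{01}}^*h\|_2^2+\|D_{T_{23}}^*h\|_2^2$, and $\langle h,DD_{T_{01}}^*h\rangle=\langle D^*h,D_{T_{01}}^*h\rangle=\|D_{T_{01}}^*h\|_2^2$. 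Expanding $\|DD_{T_{23}}^*h\|_2^2=\|h-DD_{T_{01}}^*h\|_2^2$ with these identities gives $\|DD_{T_{23}}^*h\|_2^2=\|D_{T_{23}}^*h\|_2^2-\|D_{T_{01}}^*h\|_2^2+\|DD_{T_{01}}^*h\|_2^2$. Substituting this into the display above and then using $\|DD_{T_{01}}^*h\|_2\le\|D_{T_{01}}^*h\|_2$ (the standard consequence of $D^*D$ being an orthogonal projection, already invoked in the proof of Lemma~\ref{lemma3}) should collapse everything to the asserted inequality.

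The only place the argument can go wrong is the sign of the coefficient of $\|DD_{T_{01}}^*h\|_2^2$ after the substitution: the replacement $\|DD_{T_{01}}^*h\|_2^2\le\|D_{T_{01}}^*h\|_2^2$ is legitimate only if that term carries a nonnegative coefficient. Tracking coefficients shows it is $(1+\delta_{2s})-(1-\delta_{2s})=2\delta_{2s}\ge 0$, so the step is valid, and the $\|D_{T_{01}}^*h\|_2^2$ contributions then combine into $-(1-\delta_{2s})\|D_{T_{01}}^*h\|_2^2$, matching the statement. Everything else is routine algebra, so I expect no genuine obstacle beyond this sign check.
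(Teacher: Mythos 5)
Your proposal is correct and follows essentially the same route as the paper: the one-sided $D$-RIP bounds on the two $2s$-sparse blocks, the identity (\ref{e9}) to rewrite $\|DD_{T_{23}}^*h\|_2^2$, and the contraction $\|DD_{T_{01}}^*h\|_2\le\|D_{T_{01}}^*h\|_2$ applied to the resulting $2\delta_{2s}\|DD_{T_{01}}^*h\|_2^2$ term. The sign check you flag is exactly the (implicit) step in the paper's proof, so there is no gap.
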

\begin{proof}By the definition of $\delta_{2s}$, we have
\be \|ADD_{T_{23}}^*h\|_2^2-\|ADD_{T_{01}}^*h\|_2^2 \leq
(1+\delta_{2s})\|DD_{T_{23}}^*h\|_2^2-(1-\delta_{2s})\|DD_{T_{01}}^*h\|_2^2.\ee
Applying (\ref{e9}) to the above yields \bea
\|ADD_{T_{23}}^*h\|_2^2-\|ADD_{T_{01}}^*h\|_2^2
&\leq&(1+\delta_{2s})\|D_{T_{23}}^*h\|_2^2-(1+\delta_{2s})\|D_{T_{01}}^*h\|_2^2
+2\delta_{2s}\|DD_{T_{01}}^*h\|_2^2\\
&\leq&(1+\delta_{2s})\|D_{T_{23}}^*h\|_2^2-(1-\delta_{2s})\|D_{T_{01}}^*h\|_2^2.\eea
\end{proof}

Similar as Lemma \ref{lemma6}, we have the following result.
\begin{lemma}\label{lemma9}If $n\leq 4s$ and $\delta_{2s}<0.656$, then
\be\label{e15}\sum\limits_{j=1}^{3}\|D_{T_j}^*h\|_1\leq
\frac{2}{1-\rho_s}\|D^*f-(D^*f)_{[s]}\|_1+\frac{2\sqrt{2}}{(1-\rho_s)\sqrt{1-\delta_{2s}}}\sqrt{s}\varepsilon.\ee
\end{lemma}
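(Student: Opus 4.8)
The plan is to run the proof of Lemma \ref{lemma6} essentially verbatim, with lemmas \ref{lemma7} and \ref{lemma8} replacing lemmas \ref{lemma3} and \ref{lemma4} (the sharper inequalities being exactly what the hypothesis $n\le 4s$, i.e.\ $l=3$, makes available), and with Lemma \ref{lemma(mo1)} replacing Lemma \ref{lemma(mo2)}. Assume $l=3$ and set $T_{23}=T_2\cup T_3$. Since $Ah=ADD_{T_{01}}^*h+ADD_{T_{23}}^*h$ while $\|Ah\|_2\le 2\varepsilon$ by (\ref{Ahbound1}), I start from
$$0=\|Ah-ADD_{T_{23}}^*h\|_2^2-\|ADD_{T_{01}}^*h\|_2^2\le\bigl(2\varepsilon+\|ADD_{T_{23}}^*h\|_2\bigr)^2-\|ADD_{T_{01}}^*h\|_2^2,$$
expand the square, bound $\|ADD_{T_{23}}^*h\|_2\le\sqrt{1+\delta_{2s}}\,\|D_{T_{23}}^*h\|_2$ by Lemma \ref{lemma7} and $\|ADD_{T_{23}}^*h\|_2^2-\|ADD_{T_{01}}^*h\|_2^2$ by Lemma \ref{lemma8}; the surviving terms recombine into $\bigl(2\varepsilon+\sqrt{1+\delta_{2s}}\,\|D_{T_{23}}^*h\|_2\bigr)^2$, yielding the analogue of (\ref{e5}),
$$\|D_{T_{01}}^*h\|_2^2\le\frac{\bigl(2\varepsilon+\sqrt{1+\delta_{2s}}\,\|D_{T_{23}}^*h\|_2\bigr)^2}{1-\delta_{2s}}.$$

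Next I feed this into (\ref{e8}), $\|D_{T_{01}}^*h\|_2^2=\|D_{T_0}^*h\|_2^2+\|D_{T_1}^*h\|_2^2\ge s^{-1}\bigl(\|D_{T_0}^*h\|_1^2+\|D_{T_1}^*h\|_1^2\bigr)$, to bound $\|D_{T_0}^*h\|_1^2$; I expand $s\bigl(2\varepsilon+\sqrt{1+\delta_{2s}}\,\|D_{T_{23}}^*h\|_2\bigr)^2$, enlarge $4s\varepsilon^2$ to $8s\varepsilon^2=(2\sqrt{2s}\varepsilon)^2$ and rewrite the mixed term as $2\,(2\sqrt{2s}\varepsilon)\bigl(\sqrt{s/2}\,\sqrt{1+\delta_{2s}}\,\|D_{T_{23}}^*h\|_2\bigr)$, exactly as in the proof of Lemma \ref{lemma6}. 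Writing $\|D_{T_1}^*h\|_1=\omega\sum_{j=1}^{3}\|D_{T_j}^*h\|_1$ and using Lemma \ref{lemma(mo1)} with $l=3$, which gives $\|D_{T_{23}}^*h\|_2^2=\sum_{j=2}^{3}\|D_{T_j}^*h\|_2^2\le \omega(1-\omega)\,s^{-1}\bigl(\sum_{j=1}^{3}\|D_{T_j}^*h\|_1\bigr)^2$, I then bound the two $\omega$-dependent quantities that arise by the common constant $\tfrac{(1+\delta_{2s})^2}{8}\bigl(\sum_{j=1}^{3}\|D_{T_j}^*h\|_1\bigr)^2$: for the factor in the mixed term one needs only $\omega(1-\omega)\le\tfrac14\le\tfrac{1+\delta_{2s}}{4}$, whereas the quadratic term $(1+\delta_{2s})\omega(1-\omega)-(1-\delta_{2s})\omega^2$ must be maximized over $\omega\in[0,1]$, its maximum $\tfrac{(1+\delta_{2s})^2}{8}$ being attained at $\omega=\tfrac{1+\delta_{2s}}{4}$. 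This turns the bound into a perfect square and gives
$$\|D_{T_0}^*h\|_1\le\frac{2\sqrt{2s}\,\varepsilon}{\sqrt{1-\delta_{2s}}}+\rho_s\sum_{j=1}^{3}\|D_{T_j}^*h\|_1,$$
with $\rho_s$ as in (\ref{e14}). Finally, substituting this into the cone inequality (\ref{cone constraint1}), $\sum_{j=1}^{3}\|D_{T_j}^*h\|_1\le 2\|D^*f-(D^*f)_{[s]}\|_1+\|D_{T_0}^*h\|_1$, and solving for $\sum_{j=1}^{3}\|D_{T_j}^*h\|_1$ — legitimate since $\delta_{2s}<4\sqrt2-5$ forces $\rho_s<1$ — produces (\ref{e15}).

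The only step that is not routine bookkeeping is the $\omega$-analysis: one has to verify that \emph{both} the mixed term and the quadratic term are controlled by the \emph{same} multiple of $\bigl(\sum_{j=1}^{3}\|D_{T_j}^*h\|_1\bigr)^2$, so that the estimate genuinely collapses to the perfect square $\bigl(2\sqrt{2s}\,\varepsilon+\rho_s\sqrt{1-\delta_{2s}}\sum_{j=1}^{3}\|D_{T_j}^*h\|_1\bigr)^2$. It is a minor but convenient point that the mixed term is handled by the trivial estimate $\omega(1-\omega)\le\tfrac14$, while it is the quadratic term that pins down the extremal value $\omega=\tfrac{1+\delta_{2s}}{4}$ and hence the exact constant $\rho_s=\sqrt{(1+\delta_{2s})^2/(8(1-\delta_{2s}))}$ in (\ref{e14}). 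Apart from that, the argument is a line-by-line transcription of the proof of Lemma \ref{lemma6} with $\mathcal{N}$ replaced by $\sqrt{1+\delta_{2s}}\,\|D_{T_{23}}^*h\|_2$.
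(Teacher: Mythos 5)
Your proposal is correct and is essentially the paper's own argument: the authors likewise derive the analogue of (\ref{e5}) with $\mathcal{N}=\sqrt{1+\delta_{2s}}\,\|D_{T_{23}}^*h\|_2$ from lemmas \ref{lemma7} and \ref{lemma8}, pass through (\ref{e8}) to the analogue (\ref{e13}) of (\ref{e12}), bound the mixed term via $\omega(1-\omega)\le\frac14\le\frac{1+\delta_{2s}}{4}$ and the quadratic term by maximizing $(1+\delta_{2s})\omega(1-\omega)-(1-\delta_{2s})\omega^2$ at $\omega=\frac{1+\delta_{2s}}{4}$ to get the common constant $\frac{(1+\delta_{2s})^2}{8}$, and then combine with (\ref{cone constraint1}). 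Your write-up simply fills in the steps the paper compresses into ``a similar approach as that for (\ref{e12})''.
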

\begin{proof}Applying lemmas
\ref{lemma7} and \ref{lemma8}, and a similar approach as that for
(\ref{e12}) yields \be\label{e13}\|D_{T_{0}}^*h\|_1^2
\leq\frac{(2\sqrt{2s}\varepsilon)^2+2\cdot2\sqrt{2s}\varepsilon\cdot\sqrt{s/2}\mathcal{N}+s\mathcal{N}^2-(1-\delta_{2s})\|D_{T_{1}}^*h\|_1^2}{1-\delta_{2s}},\ee
where $$\mathcal{N}=\sqrt{(1+\delta_{2s})}\|D_{T_{23}}^*h\|_2.$$
Notice that by applying Lemma \ref{lemma(mo1)}, we have
$$\sqrt{\frac{s}{2}}\mathcal{N}
\leq\sqrt{\frac{\omega(1-\omega)(1+\delta_{2s})}{2}}\sum\limits_{j=1}^{3}\|D_{T_j}^*h\|_1
\leq\sqrt{\frac{1+\delta_{2s}}{8}}\sum\limits_{j=1}^{3}\|D_{T_j}^*h\|_1
\leq\sqrt{\frac{(1+\delta_{2s})^2}{8}}\sum\limits_{j=1}^{3}\|D_{T_j}^*h\|_1$$
and \bea s\mathcal{N}^2-(1-\delta_{2s})\|D_{T_{1}}^*h\|_1^2
&\leq&[\omega(1-\omega)(1+\delta_{2s})-(1-\delta_{2s})\omega^2]
\left(\sum\limits_{j=1}^{3}\|D_{T_j}^*h\|_1\right)^2\\
&\leq&\frac{(1+\delta_{2s})^2}{8}
\left(\sum\limits_{j=1}^{3}\|D_{T_j}^*h\|_1\right)^2.\eea Thus, it
follows from the above two inequalities and (\ref{e13}) that
$$\|D^*_Th\|_1\leq\frac{2\sqrt{2s}\varepsilon}{\sqrt{1-\delta_{2s}}}
+\rho_s \sum\limits_{j=1}^{3}\|D^*_{T_j}h\|_1.$$ Combining with
(\ref{cone constraint1}), one can finish the proof.
\end{proof}
The main result of this subsection is the following theorem.
\begin{thm}\label{thm6}If $n\leq 4s$ and $\delta_{2s}<0.656$, then \be\label{result(1)}\|\hat{f}-f\|_2\leq
C_0\frac{\|D^*f-(D^*f)_{[s]}\|_1}{\sqrt{s}}+C_1\varepsilon,\ee where
$$C_0=\frac{\sqrt{2}}{(1-\rho_s)\sqrt{(1-\delta_{2s})}},\quad
C_1=\frac{2}{\sqrt{1-\delta_{2s}}}\left(1+\frac{C_0}{\sqrt{2}}\right)
\quad\mbox{and}\quad\rho_s=\sqrt{\frac{(1+\delta_{2s})^2}{8(1-\delta_{2s})}}.$$
\end{thm}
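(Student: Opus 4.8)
The plan is to mirror the structure of the proof of Theorem \ref{thm5}, now using the estimates available in the special case $n\le 4s$ (hence $l\le 3$), namely Lemmas \ref{lemma7}, \ref{lemma8}, \ref{lemma9} and the auxiliary Lemma \ref{lemma(mo1)}. The goal is to bound $\|h\|_2 = \|\hat f - f\|_2$ (recall $\|h\|_2 = \|D^*h\|_2$ since $D$ is a tight frame) in two pieces: the ``head'' part $\|D_{T_{01}}^*h\|_2$ and the ``tail'' part $\big(\sum_{j=2}^{3}\|D_{T_j}^*h\|_2^2\big)^{1/2} = \|D_{T_{23}}^*h\|_2$, and then convert the tail back to an $\ell_1$ quantity controlled by Lemma \ref{lemma9}.

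First I would obtain a head-plus-tail inequality of the same shape as (\ref{e16}). Starting from $0 = \|Ah - ADD_{T_{23}}^*h\|_2^2 - \|ADD_{T_{01}}^*h\|_2^2$ together with (\ref{Ahbound1}), I expand and apply Lemmas \ref{lemma7} and \ref{lemma8} exactly as in the derivation of (\ref{e5}) in the general case, which gives
\[
\|D_{T_{01}}^*h\|_2^2 \le \frac{\big(2\varepsilon + \mathcal{N}\big)^2}{1-\delta_{2s}},\qquad \mathcal{N} = \sqrt{1+\delta_{2s}}\,\|D_{T_{23}}^*h\|_2 .
\]
Then, using $\|h\|_2^2 = \|D_{T_{01}}^*h\|_2^2 + \|D_{T_{23}}^*h\|_2^2$ and taking square roots, I get
\[
\|h\|_2 \le \frac{2\varepsilon}{\sqrt{1-\delta_{2s}}} + \sqrt{\frac{1+\delta_{2s}}{1-\delta_{2s}}\,\|D_{T_{23}}^*h\|_2^2 + \|D_{T_{23}}^*h\|_2^2} = \frac{2\varepsilon}{\sqrt{1-\delta_{2s}}} + \sqrt{\frac{2}{1-\delta_{2s}}}\,\|D_{T_{23}}^*h\|_2 .
\]

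Next I would pass from $\|D_{T_{23}}^*h\|_2$ to the $\ell_1$ sum. By Lemma \ref{lemma(mo1)}, $\|D_{T_{23}}^*h\|_2 = \big(\sum_{j=2}^3\|D_{T_j}^*h\|_2^2\big)^{1/2} \le \sqrt{\omega(1-\omega)/s}\,\sum_{j=1}^3\|D_{T_j}^*h\|_1 \le \tfrac12 s^{-1/2}\sum_{j=1}^3\|D_{T_j}^*h\|_1$, since $\omega(1-\omega)\le 1/4$. Substituting,
\[
\|h\|_2 \le \frac{2\varepsilon}{\sqrt{1-\delta_{2s}}} + \frac{1}{\sqrt{2s}\,\sqrt{1-\delta_{2s}}}\sum_{j=1}^3\|D_{T_j}^*h\|_1 .
\]
Finally I invoke Lemma \ref{lemma9} to replace $\sum_{j=1}^3\|D_{T_j}^*h\|_1$ by $\frac{2}{1-\rho_s}\|D^*f-(D^*f)_{[s]}\|_1 + \frac{2\sqrt2}{(1-\rho_s)\sqrt{1-\delta_{2s}}}\sqrt{s}\,\varepsilon$, collect the $\varepsilon$ terms, and read off $C_0 = \frac{\sqrt2}{(1-\rho_s)\sqrt{1-\delta_{2s}}}$ and $C_1 = \frac{2}{\sqrt{1-\delta_{2s}}}\big(1 + \frac{C_0}{\sqrt2}\big)$, which matches the claimed constants. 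The one case to note is $l<3$: then some $T_j$ are empty and all the sums simply have fewer terms, so the same bounds hold a fortiori; this should be remarked rather than belabored.

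The only genuinely delicate point is bookkeeping: making sure the constant in the tail-to-$\ell_1$ conversion is exactly $\sqrt{2/(1-\delta_{2s})}\cdot\frac{1}{2\sqrt s}$ so that it combines cleanly with Lemma \ref{lemma9} to reproduce $C_0$, and checking that $\omega(1-\omega)\le 1/4$ is the sharp constant one needs here (unlike the general case, where the more intricate Lemma \ref{lemma(mo2)} bound with the $\delta_{2s}(1-3\omega/4)^2$ term was required). There is no hard analytic obstacle; the content is entirely in Lemmas \ref{lemma7}--\ref{lemma9}, and this theorem is the routine assembly step.
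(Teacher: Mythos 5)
Your proposal is correct and follows essentially the same route as the paper: the paper likewise derives the analogue of (\ref{e16}) with $\mathcal{N}=\sqrt{1+\delta_{2s}}\,\|D_{T_{23}}^*h\|_2$ (via Lemmas \ref{lemma7} and \ref{lemma8}), combines the two tail terms into $\sqrt{2/(1-\delta_{2s})}\,\|D_{T_{23}}^*h\|_2$, converts to the $\ell_1$ sum by Lemma \ref{lemma(mo1)} with $\omega(1-\omega)\le 1/4$, and finishes with Lemma \ref{lemma9}. Your constant bookkeeping reproduces exactly the stated $C_0$ and $C_1$, so no gap.
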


\begin{proof}
By a similar approach as that for (\ref{e16}), we have
\bea&&\|h\|_2\leq\frac{2\varepsilon}{\sqrt{1-\delta_{2s}}}
+\sqrt{\frac{1}{(1-\delta_{2s})}(1+\delta_{2s})\|D_{T_{23}}^*h\|_2^2+\|D_{T_{23}}^*h\|_2^2}.\eea
Then by lemmas \ref{lemma(mo1)}, we have
\bea&&\|h\|_2\leq\frac{2\varepsilon}{\sqrt{1-\delta_{2s}}}
+\frac{\sqrt{2\omega(1-\omega)}}{\sqrt{s}\sqrt{(1-\delta_{2s})}}
\sum\limits_{j=1}^{3}\|D_{T_j}^*h\|_1\leq
\frac{2\varepsilon}{\sqrt{1-\delta_{2s}}}
+\frac{1}{\sqrt{s}\sqrt{2(1-\delta_{2s})}}
\sum\limits_{j=1}^{3}\|D_{T_j}^*h\|_1.\eea Therefore, with the above
inequality and Lemma \ref{lemma9} we prove the result.
\end{proof}
\begin{rem}When $D$ is an identity matrix, that is for the classical RIP and $l_1$-minimization $(L_{1,\varepsilon})$,
theorems \ref{thm5} and \ref{thm6} were proved in
\cite{Mo}.\end{rem}

\section{Recovery via $l_q$-minimization with $0< q< 1$}
In this section, we will discuss recovery of a signal by
$l_q$-minimization $(P_{q,\varepsilon})$ with $0<q<1$. For
$q\in(0,1]$, let $h=\hat{f}-f$, where $\hat{f}$ is the solution of
$(P_{q,\varepsilon})$ and $f$ is the original signal. We follow the
same assumptions as in Section 2. Moreover, rearranging the indices
if necessary, we assume that the first $s$ coordinates of $D^*f$ are
the largest in magnitude and
$|x_{s+1}|\geq|x|_{s+2}\geq\cdots\geq|x_d|.$ For the rest of this
section, for $q\in(0,1]$, we will always assume that
$\|D_{T_1}^*h\|_q^q=\omega\sum_{j=1}^{l}\|D_{T_j}^*h\|_q^q$ for some
nonnegative number $\omega=\omega(q)\in[0,1]$. Then we have
$\sum_{j=2}^{l}\|D_{T_j}^*h\|_q^q=(1-\omega)\sum_{j=1}^{l}\|D_{T_j}^*h\|_q^q$.

\begin{lemma}\label{lemma(lai1)}For $q\in(0,1]$, we have
$$\sum\limits_{j=2}^{l}\|D_{T_j}^*h\|_2^2\leq\frac{(1-\omega)\omega^{(2-q)/q}}{s^{(2-q)/q}}\left(\sum\limits_{j=1}^{l}\|D_{T_j}^*h\|_q^q\right)^{2/q}.$$
\end{lemma}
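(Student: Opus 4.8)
The plan is to mimic the proof of Lemma \ref{lemma(mo1)}, replacing the crude $\ell^2$-versus-$\ell^1$ bound by the analogous $\ell^2$-versus-$\ell^q$ bound coming from the decreasing rearrangement of the coefficients. First I would record the elementary estimate that for any finitely supported vector $x$ whose entries are sorted in decreasing order of magnitude,
$$\sum_j |x_j|^2 \;\le\; \Big(\max_j |x_j|\Big)^{2-q}\sum_j |x_j|^q,$$
simply because $|x_j|^2 = |x_j|^{2-q}\,|x_j|^q \le (\max_k|x_k|)^{2-q}|x_j|^q$ termwise. Applying this to each block $D_{T_j}^*h$ for $j\ge 2$ and summing, and using that every entry of $D_{T_j}^*h$ is dominated by $|x_{2s+1}|$ for $j\ge 2$, gives
$$\sum_{j=2}^{l}\|D_{T_j}^*h\|_2^2 \;\le\; |x_{2s+1}|^{2-q}\sum_{j=2}^{l}\|D_{T_j}^*h\|_q^q \;=\; |x_{2s+1}|^{2-q}\,(1-\omega)\sum_{j=1}^{l}\|D_{T_j}^*h\|_q^q.$$

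The remaining point is to control $|x_{2s+1}|$. Since $|x_{2s+1}|$ is the smallest magnitude among the $s$ entries of $D_{T_1}^*h$ (the second block of size $s$), it is at most the average of the $q$-th powers over that block: $s\,|x_{2s+1}|^q \le \|D_{T_1}^*h\|_q^q = \omega\sum_{j=1}^{l}\|D_{T_j}^*h\|_q^q$, hence
$$|x_{2s+1}|^{2-q} \;\le\; \left(\frac{\omega}{s}\sum_{j=1}^{l}\|D_{T_j}^*h\|_q^q\right)^{(2-q)/q} \;=\; \frac{\omega^{(2-q)/q}}{s^{(2-q)/q}}\left(\sum_{j=1}^{l}\|D_{T_j}^*h\|_q^q\right)^{(2-q)/q}.$$
Substituting this into the previous display and combining the exponent $(2-q)/q$ on the sum with the extra factor $\sum_j\|D_{T_j}^*h\|_q^q = (\sum_j\|D_{T_j}^*h\|_q^q)^{q/q}$ produces the total exponent $2/q$, yielding exactly
$$\sum_{j=2}^{l}\|D_{T_j}^*h\|_2^2 \;\le\; \frac{(1-\omega)\,\omega^{(2-q)/q}}{s^{(2-q)/q}}\left(\sum_{j=1}^{l}\|D_{T_j}^*h\|_q^q\right)^{2/q},$$
which is the claim.

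I do not expect a serious obstacle here; the only mildly delicate bookkeeping is making sure the exponents add up correctly (the factor $|x_{2s+1}|^{2-q}$ carries exponent $(2-q)/q$ after bounding by the block average, and the leftover linear factor of $\sum_j\|D_{T_j}^*h\|_q^q$ brings it up to $2/q$) and that the bound $|x_{2s+1}|\le$ (block average)$^{1/q}$ is applied to the correct block, namely $T_1$ rather than $T_2$. At $q=1$ the statement reduces to Lemma \ref{lemma(mo1)}, which is a useful sanity check on the constants and exponents.
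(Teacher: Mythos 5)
Your argument is correct and is essentially identical to the paper's proof: the same termwise bound $\sum_j|x_j|^2\le(\max_j|x_j|)^{2-q}\sum_j|x_j|^q$, the same domination of the tail blocks by $|x_{2s+1}|$, and the same bound $s\,|x_{2s+1}|^q\le\|D_{T_1}^*h\|_q^q$ followed by the $\omega$, $1-\omega$ bookkeeping giving the exponent $2/q$. One cosmetic slip: $x_{2s+1}$ is the leading entry of the $T_2$ block, not the smallest entry of $D_{T_1}^*h$; the inequality you use still holds (a fortiori, since every entry of $T_1$ dominates $|x_{2s+1}|$ by the ordering), so nothing in the proof is affected.
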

\begin{proof}By the simple inequality
$$\sum\limits_{j=1}^{d}|x_j|^2\leq\max\limits_{1\leq j\leq
d}|x_j|^{2-q}\sum\limits_{j=1}^{d}|x_j|^{q},$$ we have
\bea\sum\limits_{j=2}^{l}\|D_{T_j}^*h\|_2^2&\leq&
|x_{2s+1}|^{2-q}\sum\limits_{j=2}^{l}\|D_{T_j}^*h\|_q^q \leq
\left(\frac{\|D_{T_1}^*h\|_q^q}{s}\right)^{(2-q)/q}\sum\limits_{j=2}^{l}\|D_{T_j}^*h\|_q^q\\
&=&\frac{(1-\omega)\omega^{(2-q)/q}}{s^{(2-q)/q}}\left(\sum\limits_{j=1}^{l}\|D_{T_j}^*h\|_q^q\right)^{2/q}.\eea
\end{proof}

\begin{lemma}\label{lemma(lai2)}For $q\in(0,1]$, we have
$$\sum\limits_{j=2}^{l}\|D_{T_j}^*h\|_2^2
+\delta_{2s}\left(\sum\limits_{j=2}^{l}\|D_{T_j}^*h\|_2\right)^2\leq\frac{(1-\omega)\omega^{(2-q)/q}+\delta_{2s}}{{s}^{2/q-1}}\left(\sum\limits_{j=1}^{l}\|D_{T_j}^*h\|_q^q\right)^{2/q}.$$
\end{lemma}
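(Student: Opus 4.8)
The plan is to bound the two terms on the left-hand side separately and then combine. The first term $\sum_{j=2}^{l}\|D_{T_j}^*h\|_2^2$ is already handled by Lemma \ref{lemma(lai1)}, which gives the bound $\frac{(1-\omega)\omega^{(2-q)/q}}{s^{(2-q)/q}}\bigl(\sum_{j=1}^{l}\|D_{T_j}^*h\|_q^q\bigr)^{2/q}$, and note that $(2-q)/q = 2/q - 1$, so this is exactly the first piece of the claimed numerator. So the real work is to show that
$$\left(\sum_{j=2}^{l}\|D_{T_j}^*h\|_2\right)^2 \leq \frac{1}{s^{2/q-1}}\left(\sum_{j=1}^{l}\|D_{T_j}^*h\|_q^q\right)^{2/q}.$$
Once this is established, multiplying by $\delta_{2s}$ and adding to the Lemma \ref{lemma(lai1)} bound yields the stated inequality with numerator $(1-\omega)\omega^{(2-q)/q}+\delta_{2s}$.

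First I would estimate each block norm $\|D_{T_j}^*h\|_2$ for $j\geq 2$ in terms of the $l_q$-norm of the \emph{previous} block $D_{T_{j-1}}^*h$. The standard shifting trick (used already in Lemma \ref{lemma(lai1)} via the inequality $|x_{2s+1}| \leq (\|D_{T_1}^*h\|_q^q/s)^{1/q}$) gives, for each $j = 2,\dots,l$, that every coordinate in block $T_j$ has magnitude at most $(\|D_{T_{j-1}}^*h\|_q^q/s)^{1/q}$, since block $T_j$ holds the coordinates ranked $(j-1)s+1$ through $js$ and these are dominated by the smallest coordinate of block $T_{j-1}$. Hence
$$\|D_{T_j}^*h\|_2^2 \leq s\left(\frac{\|D_{T_{j-1}}^*h\|_q^q}{s}\right)^{2/q} = s^{1-2/q}\left(\|D_{T_{j-1}}^*h\|_q^q\right)^{2/q},$$
so $\|D_{T_j}^*h\|_2 \leq s^{1/2-1/q}\bigl(\|D_{T_{j-1}}^*h\|_q^q\bigr)^{1/q}$. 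Summing over $j=2,\dots,l$ and using that $1/q \geq 1$ (so the map $t\mapsto t^{1/q}$ is superadditive on nonnegatives, i.e. $\sum_j a_j^{1/q} \leq (\sum_j a_j)^{1/q}$ when $a_j\geq 0$) gives
$$\sum_{j=2}^{l}\|D_{T_j}^*h\|_2 \leq s^{1/2-1/q}\sum_{j=2}^{l}\bigl(\|D_{T_{j-1}}^*h\|_q^q\bigr)^{1/q} \leq s^{1/2-1/q}\left(\sum_{j=1}^{l-1}\|D_{T_{j-1}+1}^*h\|_q^q\right)^{1/q}\le s^{1/2-1/q}\left(\sum_{j=1}^{l}\|D_{T_{j}}^*h\|_q^q\right)^{1/q}.$$
Squaring yields the displayed intermediate bound.

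The main obstacle I anticipate is getting the superadditivity direction and the index bookkeeping of the shifting argument exactly right: one must be careful that the per-coordinate bound in block $T_j$ comes from block $T_{j-1}$ (not $T_j$ itself or $T_1$), that the $j=2$ case draws on $\|D_{T_1}^*h\|_q^q$ which is included in the full sum $\sum_{j=1}^l$, and that the concavity/superadditivity inequality $\sum a_j^{1/q}\le(\sum a_j)^{1/q}$ is applied in the correct direction for $0<q\le1$ (equivalently $1/q\ge1$), which is where the restriction $q\le1$ is essential. I would double-check the boundary case $q=1$, where $(2-q)/q = 1$ and $2/q - 1 = 1$, to confirm the lemma reduces to the familiar $l_1$ statement (and is consistent with Lemma \ref{lemma(mo1)} up to the treatment of the $(1-3\omega/4)$ refinement, which is not claimed here). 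The rest is routine algebra combining the two pieces.
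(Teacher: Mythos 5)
Your proof is correct, and its overall skeleton matches the paper's: bound $\sum_{j\ge 2}\|D_{T_j}^*h\|_2$ by $s^{1/2-1/q}\bigl(\sum_{j\ge 1}\|D_{T_j}^*h\|_q^q\bigr)^{1/q}$, square, multiply by $\delta_{2s}$, and add the bound of Lemma \ref{lemma(lai1)}. The genuine difference is the per-block estimate. The paper invokes the refined shifting inequality of \cite[Lemma 2]{Lai}, namely $s^{1/q-1/2}\|D_{T_j}^*h\|_2\le\|D_{T_j}^*h\|_q+s^{1/q}(|x_{js+1}|-|x_{js+s}|)$, telescopes the boundary terms to $s^{1/q}|x_{2s+1}|\le\|D_{T_1}^*h\|_q$, and then applies $\|n\|_1\le\|n\|_q$ on $\mathbb{R}^l$; you instead use the elementary block-domination estimate (every entry of $T_j$, $j\ge 2$, is at most the $q$-average of $T_{j-1}$, by the assumed ordering $|x_{s+1}|\ge|x_{s+2}|\ge\cdots$), giving $\|D_{T_j}^*h\|_2\le s^{1/2-1/q}\|D_{T_{j-1}}^*h\|_q$, followed by the same superadditivity $\sum_j a_j^{1/q}\le\bigl(\sum_j a_j\bigr)^{1/q}$. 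Since in this lemma the paper's final superadditivity step discards whatever the refined inequality gains (unlike Lemma \ref{lemma(mo2)}, where the analogous refinement produces the $(1-3\omega/4)$ factor that matters), the two routes yield the identical bound; yours is self-contained and avoids the external citation. Two cosmetic slips: in the paper's indexing, block $T_j$ holds the coordinates ranked $js+1$ through $(j+1)s$, not $(j-1)s+1$ through $js$, though your actual estimate (entries of $T_j$ dominated by the smallest entry of $T_{j-1}$) is unaffected; and the subscript ``$T_{j-1}+1$'' in your displayed chain is a typo for the reindexed sum $\sum_{j=1}^{l-1}\|D_{T_j}^*h\|_q^q$.
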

\begin{proof}By \cite[Lemma 2]{Lai}, we have
$$s^{1/q-1/2}\|D_{T_j}^*h\|_2\leq \|D_{T_j}^*h\|_q+s^{1/q}(|x_{js+1}|-|x_{js+s}|),\quad
j=2,\cdots,l.$$ Therefore, we have \bea
s^{1/q-1/2}\sum\limits_{j=2}^{l}\|D_{T_j}^*h\|_2\leq
\sum\limits_{j=2}^{l}\|D_{T_j}^*h\|_q+s^{1/q}|x_{2s+1}|\leq
\sum\limits_{j=2}^{l}\|D_{T_j}^*h\|_q
+\|D_{T_1}^*h\|_q\leq\left(\sum\limits_{j=1}^{l}\|D_{T_j}^*h\|_q^q\right)^{1/q},\eea
where for the last inequality we have used $\|n\|_1\leq\|n\|_q$ for
$n\in\mathbb{R}^l$. Combining with Lemma \ref{lemma(lai1)}, one can
conclude the result.
\end{proof}
Analogous to (\ref{cone constraint1}) and (\ref{Ahbound1}), one can
prove that \be\label{cone constraint2}
\sum\limits_{j=1}^{l}\|D^*_{T_j}h\|_q^q\leq2\|D^*_{T^c}f\|_q^q+\|D^*_Th\|_q^q\ee
and \be\label{Ahbound2} \|Ah\|_2\leq2\varepsilon.\ee

Denote that
$$\rho_s(q)=\sqrt{\frac{\delta_{2s}}{1-\delta_{2s}}+\frac{q}{2^{2/q}(1-\delta_{2s})}
\left(\frac{2-q}{2-\delta_{2s}}\right)^{\frac{2}{q}-1}}.$$ For
$\delta_{2s}<\frac{1}{2}$, one can prove that there exists a value
$q_0=q_0(\delta_{2s})\in(0,1]$ such that for all $q\in(0,q_0)$,
$\rho_s(q)<1$. Indeed, by a easy calculation, $\rho_s(q)<1$ is
equivalent to $$\delta_{2s}+\frac{q}{2^{2/q+1}}
\left(\frac{2-q}{2-\delta_{2s}}\right)^{\frac{2}{q}-1}<\frac{1}{2}.$$
Since the second term on the left hand side goes to zero as
$q\rightarrow 0_+$ as $\delta_{2s}<1,q\leq1$ and
$$\frac{1}{2^{2/q+1}}
\left(\frac{2-q}{2-\delta_{2s}}\right)^{\frac{2}{q}-1}\leq
\left(\frac{2-q}{2}\right)^{2/q}\approx\frac{1}{e},$$ one can finish
the conclusion.

\begin{lemma}\label{lemma10}If $\delta_{2s}<1/2$ and $q\in(0,q_0)$, then
$$\left(\sum\limits_{j=1}^{l}\|D_{T_j}^*h\|_q^q\right)^{\frac{1}{q}}
\leq\frac{2^{\frac{2}{q}-1}}{{(1-\rho^q_s(q))}^{1/q}}\|D^*f-(D^*f)_{[s]}\|_q
+\frac{2^{\frac{2}{q}-\frac{1}{2}}s^{\frac{1}{q}-\frac{1}{2}}\varepsilon}{{(1-\rho^q_s(q))}^{1/q}\sqrt{(1-\delta_{2s})}}.$$
\end{lemma}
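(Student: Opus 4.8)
The plan is to mimic the structure of the proof of Lemma \ref{lemma6}, but carried out in the $l_q$ quasi-norm with the refined estimates of Lemmas \ref{lemma(lai1)} and \ref{lemma(lai2)} playing the roles of Lemmas \ref{lemma(mo1)} and \ref{lemma(mo2)}. The starting point is the same algebraic trick: since $Ah=\sum_{j=0}^{l}ADD_{T_j}^*h$ and $\|Ah\|_2\le 2\varepsilon$ by (\ref{Ahbound2}), expand
\[
0=\Bigl\|Ah-\sum_{j=2}^{l}ADD_{T_j}^*h\Bigr\|_2^2-\|ADD_{T_{01}}^*h\|_2^2
\le \Bigl(2\varepsilon+\bigl\|\textstyle\sum_{j=2}^{l}ADD_{T_j}^*h\bigr\|_2\Bigr)^2-\|ADD_{T_{01}}^*h\|_2^2 .
\]
Applying Lemmas \ref{lemma3} and \ref{lemma4} exactly as in the derivation of (\ref{e5}) gives $\|D_{T_{01}}^*h\|_2^2\le (2\varepsilon+\mathcal N)^2/(1-\delta_{2s})$ with $\mathcal N=\bigl(\sum_{j=2}^{l}\|D_{T_j}^*h\|_2^2+\delta_{2s}(\sum_{j=2}^{l}\|D_{T_j}^*h\|_2)^2\bigr)^{1/2}$. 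This step is verbatim the earlier one, since Lemmas \ref{lemma3}, \ref{lemma4} are stated independently of which minimization produced $h$.

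Next I would pass from the $l_2$ bound on $D_{T_0}^*h$ to an $l_q$ bound. Here the key elementary inequality is $\|D_{T_0}^*h\|_q\le s^{1/q-1/2}\|D_{T_0}^*h\|_2$ (valid because $T_0$ has at most $s$ coordinates), so $\|D_{T_0}^*h\|_q^2\le s^{2/q-1}\|D_{T_{01}}^*h\|_2^2$ after also discarding the nonnegative $\|D_{T_1}^*h\|_2^2$ term — or, to keep the analogue of (\ref{e12}), retaining it as $-(1-\delta_{2s})\|D_{T_1}^*h\|_2^2/(1-\delta_{2s})$ and bounding $\|D_{T_1}^*h\|_2\ge s^{1/2-1/q}\|D_{T_1}^*h\|_q$. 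Combining with Lemma \ref{lemma(lai2)} to estimate $\mathcal N$ and with Lemma \ref{lemma(lai1)} to estimate the $\|D_{T_1}^*h\|_q$ term, I expect to reach, after collecting the three pieces coming from the expansion of $(2\varepsilon+\mathcal N)^2$,
\[
\|D_T^*h\|_q^q\le \frac{(2\sqrt{2})^q\, s^{1-q/2}\,\varepsilon^q}{(1-\delta_{2s})^{q/2}}+\rho_s^q(q)\sum_{j=1}^{l}\|D_{T_j}^*h\|_q^q ,
\]
the exact shape being dictated by the constants appearing in the definition of $\rho_s(q)$. Feeding this into the cone constraint (\ref{cone constraint2}), namely $\sum_{j=1}^{l}\|D_{T_j}^*h\|_q^q\le 2\|D^*_{T^c}f\|_q^q+\|D_T^*h\|_q^q$, and solving for $\sum_{j=1}^{l}\|D_{T_j}^*h\|_q^q$ using $\rho_s^q(q)<1$ (guaranteed for $q\in(0,q_0)$ by the discussion preceding the lemma), yields a bound on $\sum_{j=1}^{l}\|D_{T_j}^*h\|_q^q$; raising it to the $1/q$ power and using the quasi-norm inequality $(a+b)^{1/q}\le 2^{1/q-1}(a^{1/q}+b^{1/q})$ to split the two terms produces the claimed estimate with its $2^{2/q-1}$ and $2^{2/q-1/2}$ prefactors.

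The main obstacle is the bookkeeping of the sharp constant: one must verify that the maximum over $\omega\in[0,1]$ of the relevant combination $(1-\omega)\omega^{(2-q)/q}+\delta_{2s}$ — weighted against the subtracted $(1-\delta_{2s})\omega^2$-type term from $\|D_{T_1}^*h\|_q$ — is controlled by precisely the quantity $\tfrac{q}{2^{2/q}}\bigl(\tfrac{2-q}{2-\delta_{2s}}\bigr)^{2/q-1}$ that sits inside $\rho_s(q)$. This is where the exponent $(2-q)/q$ and the calculus fact $\max_{\omega}(1-\omega)\omega^{(2-q)/q}=\tfrac{q}{2}\bigl(\tfrac{2-q}{2}\bigr)^{2/q-1}$ (attained at $\omega=(2-q)/2$) enter; matching it against the normalization by $s^{2/q-1}$ and the factor $1/(1-\delta_{2s})$ from (\ref{e5}) is the delicate part. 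Everything else is a routine transcription of the $l_1$ argument of Lemma \ref{lemma6} with $1$ replaced by $q$ and the Cai–Wang type splitting replaced by \cite[Lemma 2]{Lai}.
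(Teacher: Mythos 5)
Your proposal follows essentially the same route as the paper's proof: the paper likewise re-uses (\ref{e5}) (still valid here because of (\ref{Ahbound2})), passes to the $l_q$ scale via $\|D_{T_{01}}^*h\|_2^2\geq\bigl(\|D_{T_0}^*h\|_q^2+\|D_{T_1}^*h\|_q^2\bigr)/s^{(2-q)/q}$, bounds $\mathcal{N}$ with Lemma \ref{lemma(lai2)} and the combined expression $\omega^{(2-q)/q}(1-\omega)+\delta_{2s}-(1-\delta_{2s})\omega^{2/q}$ (evaluated at $\omega=\frac{2-q}{2(2-\delta_{2s})}$, which is precisely where $\rho_s(q)$ comes from), then feeds the resulting bound on $\|D_T^*h\|_q^q$ into (\ref{cone constraint2}) and finishes with $(a+b)^{1/q}\leq 2^{1/q-1}(a^{1/q}+b^{1/q})$, exactly as you outline. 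The one bookkeeping correction: the $\varepsilon$-term in your intermediate inequality should be $2^q(2s)^{1-q/2}\varepsilon^q/(1-\delta_{2s})^{q/2}$ rather than $(2\sqrt{2})^q s^{1-q/2}\varepsilon^q/(1-\delta_{2s})^{q/2}$, because the square is completed by inflating $2s^{1/q-1/2}\varepsilon$ to $2(2s)^{1/q-1/2}\varepsilon$ (the cross term carries the factor $2^{1/q-1/2}$ from $(s/2)^{1/q-1/2}\mathcal{N}$, and inflating the $\rho_s(q)$-coefficient instead would destroy the contraction), and it is exactly this constant that produces the $2^{2/q-\frac{1}{2}}$ in the statement.
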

\begin{proof}Notice that we also have (\ref{e5}).
Using
\be\label{e17}\|D_{T_{01}}^*h\|_2^2=\|D_{T_{0}}^*h\|_2^2+\|D_{T_{1}}^*h\|_2^2
\geq\frac{\|D_{T_{0}}^*h\|_q^2+\|D_{T_{1}}^*h\|_q^2}{s^{(2-q)/q}}\ee
to (\ref{e5}), one can get
\begin{eqnarray}\label{e18}\|D_{T_{0}}^*h\|_q^2&\leq&\frac{s^{(2-q)/q}(2\varepsilon+\mathcal{N})^2-(1-\delta_{2s})\|D_{T_{1}}^*h\|_q^2}{1-\delta_{2s}}\nonumber\\
&\leq&\frac{(2(2s)^{\frac{1}{q}-\frac{1}{2}}\varepsilon)^2+2\cdot2(2s)^{\frac{1}{q}-\frac{1}{2}}\varepsilon\cdot
\left(\frac{s}{2}\right)^{\frac{1}{q}-\frac{1}{2}}\mathcal{N}+s^{\frac{2}{q}-1}\mathcal{N}^2-(1-\delta_{2s})\|D_{T_{q}}^*h\|_q^2}{1-\delta_{2s}}.\end{eqnarray}
Notice that by Lemma \ref{lemma(lai2)}, we have \bea
\left(\frac{s}{2}\right)^{\frac{1}{q}-\frac{1}{2}}\mathcal{N}
&\leq&\sqrt{\frac{(1-\omega)\omega^{(2-q)/q}+\delta_{2s}}{2^{(2-q)/q}}}
\left(\sum\limits_{j=1}^{l}\|D_{T_j}^*h\|_q^q\right)^{\frac{1}{q}}\\
&\leq&\
\sqrt{\delta_{2s}+\frac{q}{2^{2/q}}\left(\frac{2-q}{2-\delta_{2s}}\right)^{\frac{2}{q}-1}}
\left(\sum\limits_{j=1}^{l}\|D_{T_j}^*h\|_q^q\right)^{\frac{1}{q}}
\eea and \bea
s^{\frac{2}{q}-1}\mathcal{N}^2-(1-\delta_{2s})\|D_{T_{1}}^*h\|_q^2
&\leq&[\omega^{(2-q)/q}(1-\omega)+\delta_{2s}-(1-\delta_{2s})\omega^{2/q}]
\left(\sum\limits_{j=1}^{l}\|D_{T_j}^*h\|_q^q\right)^{\frac{2}{q}}\\
&\leq&\left(\delta_{2s}+\frac{q}{2^{2/q}}\left(\frac{2-q}{2-\delta_{2s}}\right)^{\frac{2}{q}-1}\right)
\left(\sum\limits_{j=1}^{l}\|D_{T_j}^*h\|_q^q\right)^{\frac{2}{q}},\eea
where we have used the fact for all $\delta_{2s}\in [0,1)$,
\bea\max_{\omega\in
[0,1]}\frac{(1-\omega)\omega^{(2-q)/q}+\delta_{2s}}{2^{(2-q)/q}}
&\leq&\frac{(1-\omega)\omega^{(2-q)/q}+\delta_{2s}}{2^{(2-q)/q}}\Big\rvert_{\omega=\frac{2-q}{2}}
=\frac{\delta_{2s}+\frac{q}{2^{2/q}}\left(2-q\right)^{\frac{2}{q}-1}}{2^{(2-q)/q}}\\
&\leq&
\delta_{2s}+\frac{q}{2^{2/q}}\left(\frac{2-q}{2-\delta_{2s}}\right)^{\frac{2}{q}-1}\eea
and \bea&&\max_{\omega\in
[0,1]}\omega^{(2-q)/q}(1-\omega)+\delta_{2s}-(1-\delta_{2s})\omega^{2/q}\\
&\leq&\omega^{(2-q)/q}(1-\omega)+\delta_{2s}-(1-\delta_{2s})\omega^{2/q}\big\rvert_{\omega=\frac{2-q}{2(2-\delta_{2s})}}
=\delta_{2s}+\frac{q}{2^{2/q}}\left(\frac{2-q}{2-\delta_{2s}}\right)^{\frac{2}{q}-1}.\eea
Thus, it follows from the above two inequalities and (\ref{e18})
that
$$\|D^*_Th\|_q\leq\frac{2(2s)^{\frac{1}{q}-\frac{1}{2}}\varepsilon}{\sqrt{1-\delta_{2s}}}
+\rho_s(q)
\left(\sum\limits_{j=1}^{l}\|D^*_{T_j}h\|_q^q\right)^{\frac{1}{q}}.$$
Therefore, we get
$$\|D^*_Th\|_q^q\leq\frac{2^q(2s)^{1-\frac{q}{2}}\varepsilon^q}{(1-\delta_{2s})^{\frac{q}{2}}}
+\rho_s^q(q) \sum\limits_{j=1}^{l}\|D^*_{T_j}h\|_q^q,$$ where we
have used $(|a|+|b|)^q\leq |a|^q+|b|^q.$ Plug the above inequality
to (\ref{cone constraint2}) and by a easy computation, one can show
that
$$\sum\limits_{j=1}^{l}\|D_{T_j}^*h\|_q^q\leq
\frac{2}{1-\rho_s^q(q)}\|D_{T^c}^*f\|_q^q
+\frac{2^q(2s)^{1-\frac{q}{2}}\varepsilon^q}{(1-\delta_{2s})^{\frac{q}{2}}(1-\rho_s^q(q))}.$$
Using the basic inequality $\|v\|_q\leq 2^{\frac{1}{q}-1}\|v\|_1$
for $v\in\mathbb{R}^2$ to the above inequality, one can finish the
proof.
\end{proof}

The main result of this subsection is the following theorem.

\begin{thm}\label{thm7}If $\delta_{2s}<\frac{1}{2}$, then there exists a value $q_0=q_0(\delta_{2s})\in(0,1]$ such that for any $q\in(0,q_0)$,
$$\rho_s(q)=\sqrt{\frac{\delta_{2s}}{1-\delta_{2s}}+\frac{q}{2^{2/q}(1-\delta_{2s})}
\left(\frac{2-q}{2-\delta_{2s}}\right)^{\frac{2}{q}-1}}<1.$$
Furthermore, \be\|\hat{f}-f\|_2\leq
C_0\frac{\|D^*f-(D^*f)_{[s]}\|_q}{s^{\frac{1}{q}-\frac{1}{2}}}+
C_1\varepsilon,\ee where
$$C_0=\frac{2^{1/q-1}}{(1-\rho^q_s(q))^{1/q}}\sqrt{\frac{(2-\delta_{2s})(2-q)^{(2-q)/q}q+2^{2/q}\delta_{2s}}{1-\delta_{2s}}}\quad
\mbox{and}\quad
C_1=\frac{2}{\sqrt{1-\delta_{2s}}}\left(1+\frac{C_0}{\sqrt{2}}\right).$$\end{thm}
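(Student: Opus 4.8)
The plan is to follow the proof of Theorem~\ref{thm5} almost line by line, with the $l_q$ estimates of Lemmas~\ref{lemma(lai1)} and~\ref{lemma(lai2)} in place of the $l_1$ estimates and with Lemma~\ref{lemma10} in place of Lemma~\ref{lemma6}. The first assertion of the theorem --- that for $\delta_{2s}<1/2$ there is a value $q_0=q_0(\delta_{2s})\in(0,1]$ with $\rho_s(q)<1$ on $(0,q_0)$ --- has already been verified in the paragraph preceding Lemma~\ref{lemma10}, where $\rho_s(q)<1$ was reduced to $\delta_{2s}+\tfrac{q}{2^{2/q+1}}\bigl(\tfrac{2-q}{2-\delta_{2s}}\bigr)^{2/q-1}<\tfrac12$ and the second summand was shown to vanish as $q\to0_+$. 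So it remains only to establish the error estimate.

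First I would note that inequality (\ref{e5}), namely $\|D_{T_{01}}^*h\|_2^2\le(2\varepsilon+\mathcal{N})^2/(1-\delta_{2s})$ with $\mathcal{N}=\bigl(\sum_{j=2}^{l}\|D_{T_j}^*h\|_2^2+\delta_{2s}(\sum_{j=2}^{l}\|D_{T_j}^*h\|_2)^2\bigr)^{1/2}$, remains valid verbatim in the present setting: its derivation used only the decomposition $Ah=\sum_{j=0}^{l}ADD_{T_j}^*h$, the bound $\|Ah\|_2\le2\varepsilon$ from (\ref{Ahbound2}), and Lemmas~\ref{lemma3}--\ref{lemma4}, none of which involves $q$. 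Combining this with the tight-frame identity $\|h\|_2^2=\|D^*h\|_2^2=\|D_{T_{01}}^*h\|_2^2+\sum_{j=2}^{l}\|D_{T_j}^*h\|_2^2$ and taking square roots exactly as in the derivation of (\ref{e16}) yields
$$\|h\|_2\le\frac{2\varepsilon}{\sqrt{1-\delta_{2s}}}+\sqrt{\frac{1}{1-\delta_{2s}}\Bigl[\sum_{j=2}^{l}\|D_{T_j}^*h\|_2^2+\delta_{2s}\Bigl(\sum_{j=2}^{l}\|D_{T_j}^*h\|_2\Bigr)^2\Bigr]+\sum_{j=2}^{l}\|D_{T_j}^*h\|_2^2}.$$

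Next I would apply Lemmas~\ref{lemma(lai1)} and~\ref{lemma(lai2)} to bound each bracketed quantity by a multiple of $s^{-(2/q-1)}\bigl(\sum_{j=1}^{l}\|D_{T_j}^*h\|_q^q\bigr)^{2/q}$; this turns the radical above into
$$\frac{1}{s^{1/q-1/2}}\sqrt{\frac{(2-\delta_{2s})(1-\omega)\omega^{(2-q)/q}+\delta_{2s}}{1-\delta_{2s}}}\Bigl(\sum_{j=1}^{l}\|D_{T_j}^*h\|_q^q\Bigr)^{1/q}.$$
The only real computation is the maximization of $(1-\omega)\omega^{(2-q)/q}$ over $\omega\in[0,1]$: setting its derivative $\omega^{(2-q)/q-1}\bigl(\tfrac{2-q}{q}-\tfrac{2}{q}\omega\bigr)$ equal to zero gives the maximizer $\omega=\tfrac{2-q}{2}$, where the value equals $q(2-q)^{(2-q)/q}/2^{2/q}$ (here one uses $2\cdot2^{(2-q)/q}=2^{2/q}$). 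Substituting the maximum, one obtains
$$\|h\|_2\le\frac{2\varepsilon}{\sqrt{1-\delta_{2s}}}+\frac{1}{2^{1/q}s^{1/q-1/2}}\sqrt{\frac{(2-\delta_{2s})(2-q)^{(2-q)/q}q+2^{2/q}\delta_{2s}}{1-\delta_{2s}}}\Bigl(\sum_{j=1}^{l}\|D_{T_j}^*h\|_q^q\Bigr)^{1/q}.$$

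Finally I would substitute the bound of Lemma~\ref{lemma10} for $\bigl(\sum_{j=1}^{l}\|D_{T_j}^*h\|_q^q\bigr)^{1/q}$ and collect the powers of $2$ and of $s$: the coefficient of $\|D^*f-(D^*f)_{[s]}\|_q/s^{1/q-1/2}$ becomes exactly $C_0$, and the coefficient of $\varepsilon$ is $2/\sqrt{1-\delta_{2s}}$ (from the first term) plus $\sqrt2\,C_0/\sqrt{1-\delta_{2s}}$ (from the second), i.e. $C_1=\tfrac{2}{\sqrt{1-\delta_{2s}}}(1+C_0/\sqrt2)$, as claimed. I expect the main obstacle to be purely a matter of bookkeeping --- carrying the exponents $1/q$, $1/q-1/2$, $(2-q)/q$ and $2/q$ correctly through the chain of substitutions and checking that the maximization constant collapses into the radical appearing in $C_0$; there is no analytic difficulty beyond what is already contained in Lemmas~\ref{lemma(lai1)}, \ref{lemma(lai2)}, \ref{lemma10} and inequality (\ref{e5}), the argument being a direct transcription of the proof of Theorem~\ref{thm5} with $l_1$ replaced by $l_q$.
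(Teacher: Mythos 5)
Your proposal is correct and follows essentially the same route as the paper: both start from the bound (\ref{e16}) (derived from (\ref{e5})), substitute Lemmas \ref{lemma(lai1)} and \ref{lemma(lai2)}, maximize $(1-\omega)\omega^{(2-q)/q}$ at $\omega=(2-q)/2$, and then invoke Lemma \ref{lemma10} to obtain the stated constants $C_0$ and $C_1$. The bookkeeping of the powers of $2$ and $s$ in your final step checks out.
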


\begin{proof}Notice that we also have (\ref{e16}). Using lemmas \ref{lemma(lai1)} and \ref{lemma(lai2)} to
(\ref{e16}), we get
\bea\|h\|_2&\leq&\frac{2\varepsilon}{\sqrt{1-\delta_{2s}}}
+\sqrt{\frac{(2-\delta_{2s})\omega^{(2-q)/q}(1-\omega)+\delta_{2s}}{(1-\delta_{2s})s^{(2-q)/q}}}
\left(\sum\limits_{j=1}^{l}\|D_{T_j}^*h\|_q^q\right)^{\frac{1}{q}}\\
&\leq&\frac{2\varepsilon}{\sqrt{1-\delta_{2s}}}
+\sqrt{\frac{(2-\delta_{2s})(2-q)^{(2-q)/q}q+2^{2/q}\delta_{2s}}{(1-\delta_{2s})2^{2/q}s^{(2-q)/q}}}
\left(\sum\limits_{j=1}^{l}\|D_{T_j}^*h\|_q^q\right)^{\frac{1}{q}},\eea
where we have used $$\max_{\omega\in
[0,1]}(2-\delta_{2s})(1-\omega)\omega^{(2-q)/q}
\leq(2-\delta_{2s})(1-\omega)\omega^{(2-q)/q}\big\rvert_{\omega=\frac{2-q}{2}}.$$
Now the proof can be finished by directly applying Lemma
\ref{lemma10}.
\end{proof}
\begin{rem}When $D$ is an identity matrix, that is for the classical RIP and $l_q$-minimization $(L_{q,\varepsilon})$,
Theorem \ref{thm7} was shown in \cite{Lai}.\end{rem}

\end{document}